\newcommand{\R}{\mathds R}
\newcommand{\absconv}{\mathrm{absconv}}
\newcommand{\EP}{$c_0$EP}
\newcommand{\upstar}[1]{{#1}^{\raise1pt\hbox{$\scriptscriptstyle*$}}}
\newcommand{\chilow}[1]{\chi_{\lower2pt\hbox{$\scriptstyle#1$}}}
\DeclareMathOperator{\Ker}{Ker}
\DeclareMathOperator{\Img}{Im}
\DeclareMathOperator{\supp}{supp}
\DeclareMathOperator{\height}{ht}
\title[On the $c_0$-extension property]{On the $\mathbf{c_0}$-extension property}
\author{Claudia Correa}
\thanks{The author was partially supported by FAPESP grant 2018/09797-2.}
\address{Centro de Matem\'atica, Computa\c c\~ao e Cogni\c c\~ao \hfill\break\indent Universidade Federal do ABC, Brazil}
\email{claudiac.mat@gmail.com, claudia.correa@ufabc.edu.br}
\urladdr{http://professor.ufabc.edu.br/\~{}claudia.correa}
\subjclass[2010]{Primary 46B26, 46E15; Secondary 03E35, 54G12}
\keywords{Generalizations of Sobczy's Theorem, Banach spaces of continuous functions, monolithic compact spaces, Corson compacta, scattered compact spaces}
\date{May 16th, 2020}
\begin{document}

\theoremstyle{plain}\newtheorem{teo}{Theorem}[section]
\theoremstyle{plain}\newtheorem{prop}[teo]{Proposition}
\theoremstyle{plain}\newtheorem{lem}[teo]{Lemma}
\theoremstyle{plain}\newtheorem{cor}[teo]{Corollary}
\theoremstyle{definition}\newtheorem{defin}[teo]{Definition}
\theoremstyle{remark}\newtheorem{rem}[teo]{Remark}
\theoremstyle{plain} \newtheorem{assum}[teo]{Assumption}
\theoremstyle{definition}\newtheorem{example}[teo]{Example}

\begin{abstract}
In this work we investigate the $c_0$-extension property. This property generalizes Sobczyk's theorem in the context of nonseparable Banach spaces. We prove that a sufficient condition for a Banach space to have this property is that its closed dual unit ball is weak-star monolithic. We also present several results about the $c_0$-extension property in the context of $C(K)$ Banach spaces. An interesting result in the realm of $C(K)$ spaces is that the existence of a Corson compactum $K$ such that $C(K)$ does not have the $c_0$-extension property is independent from the axioms of $ZFC$.
\end{abstract}

\maketitle

\begin{section}{Introduction}

Sobczyk's theorem \cite{Sobczyk} is a classical result about the structure of separable Banach spaces. It states that if $X$ is a separable Banach space, then every $c_0$-valued bounded operator defined on a closed subspace of $X$ admits a $c_0$-valued bounded extension defined on $X$. The search for generalizations of Sobczyk's theorem in the context of nonseparable Banach spaces has attracted a lot of attention in the last decades \cite{SobczykLine, c0EP, c0EPline, DrygierPlebanek, Eloi, Molto, Patterson}. In \cite{SobczykLine}, the $c_0$-extension property was introduced. We say that a Banach space $X$ has the {\it $c_0$-extension property} (\EP) if every $c_0$-valued bounded operator defined on a closed subspace of $X$ admits a $c_0$-valued bounded extension defined on $X$. Clearly Sobczyk's theorem implies that every separable Banach space has the \EP. Let us recall the main results known about the \EP. An adaptation of Veech's proof of Sobczyk's theorem \cite{Veech} shows that every weakly compactly generated Banach space has the \EP\ \cite[Proposition~2.2]{c0EP}. Recall that a Banach space is said to be {\it weakly compactly generated} (WCG) if it contains a weakly compact subset that is linearly dense. Clearly every separable and every reflexive Banach space is WCG and the canonical example of a neither separable nor reflexive WCG space is given by $c_0(I)$, for any uncountable set $I$ \cite[page 575]{Fabian}.
In \cite[Theorem~2.2]{SobczykLine}, it was shown that if $K$ is compact line, then $C(K)$ has the \EP\ if and only if $K$ is monolithic. Here, as usual, $C(K)$ denotes the Banach space of real-valued continuous functions defined on a compact Hausdorff space $K$, endowed with the supremum norm. Recall that a {\it compact line} is a totally ordered set that is compact when endowed with the order topology. The notion of monolithicity plays a central role in this work. We say that a compact Hausdorff space $K$ is {\it monolithic} if every separable subspace of $K$ is second countable.

One of the main results of this work is Theorem \ref{monoc0EP}, where we show that if $X$ is a Banach space with closed dual unit ball weak-star monolithic, then $X$ has the \EP.
We denote the closed dual unit ball of a Banach space $X$ by $B_{X^*}$ and the weak-star topology by $w^*$-topology. Observe that all the spaces with the \EP\ mentioned above have closed dual unit ball $w^*$-monolithic. Indeed, it is well-known that if $X$ is a separable Banach space, then $B_{X^*}$ is $w^*$-metrizable \cite[Proposition 3.103]{Fabian} and therefore $B_{X^*}$ is $w^*$-monolithic. Moreover, if $X$ is a WCG Banach space, then $(B_{X^*}, w^*)$ is an Eberlein compactum \cite[Theorem~13.20]{Fabian} and therefore $B_{X^*}$ is $w^*$-monolithic. Recall that a compact space is an {\it Eberlein compactum} if it is homeomorphic to a weakly compact subset of a Banach space, endowed with the weak topology. Finally it follows from Lemma \ref{MonoMBolaMono} that if $K$ is a monolithic compact line, then $B_{C(K)^*}$ is $w^*$-monolithic.
A really interesting class of monolithic compact spaces is the class of Corson compacta. We say that a compact space $K$ is a {\it Corson compactum} if there exists a set $I$ such that $K$ is homeomorphic to a closed subset of $\Sigma(I)$, endowed with the product topology. By $\Sigma(I)$ we denote the subset of $\R^I$ formed by functions with countable support.
Since every Corson compactum is monolithic, it follows from Theorem \ref{monoc0EP} that a Banach space $X$ has the \EP\ whenever $(B_{X^*}, w^*)$ is a Corson compactum. Those are precisely the {\it weakly Lindelöf determined} Banach spaces (WLD).
Therefore the class of Banach spaces with the \EP\ contains the class of WLD spaces. Note that there exist Banach spaces with the \EP\ that are not WLD. For instance, the space of continuous functions $C[0,\omega_1]$ defined on the ordinal segment $[0,\omega_1]$ has the \EP, since $[0,\omega_1]$ is a monolithic compact line, but $C[0,\omega_1]$ is not WLD. To see that $C[0,\omega_1]$ is not WLD, note that a necessary condition for a $C(K)$ space to be WLD is that $K$ is a Corson compactum and $[0,\omega_1]$ is not a Corson compactum. A classical class of Banach spaces that contains properly the WLD spaces is the class of $1$-Plichko spaces. We say that a Banach space $X$ is {\it $1$-Plichko} if $X^*$ contains a $1$-norming $\Sigma$-subspace (see \cite{Biorthogonal} for more information on those spaces). Recall that a Banach space $X$ is WLD if and only if $X^*$ is a $\Sigma$-subspace of itself \cite[Theorem~5.37]{Biorthogonal} and therefore every WLD space is $1$-Plichko. An example of a non WLD space that is $1$-Plichko is given by the space $\ell_1(I)$, for any uncountable set $I$ \cite[Example~6.9]{Kalenda}. A natural question at this point is whether every $1$-Plichko space has the \EP. In Proposition \ref{l1SemEP}, we answer this question negatively by showing that if $I$ is an uncountable set, then the space $\ell_1(I)$ does not have the \EP.

In Section \ref{C(K)spaces}, we investigate the \EP\ in the context of $C(K)$ spaces. It is well-known that if $K$ is a compact Hausdorff space, then $C(K)$ is separable if and only if $K$ is metrizable \cite[Lemma~3.102]{Fabian}. Therefore, it follows from Sobczyk's theorem that $C(K)$ has the \EP, for every metrizable compact space $K$. This result can be extended to the class of Eberlein compacta that generalizes the class of metrizable compact spaces. Recall that if $K$ is a compact Hausdorff space, then $K$ is an Eberlein compactum if and only if $C(K)$ is WCG \cite[Theorem~14.9]{Fabian}. Thus $C(K)$ has the \EP\ whenever $K$ is an Eberlein compactum.
Note that the class of Corson compacta generalizes properly the class of Eberlein compacta. In this work we address the problem of determining whether $C(K)$ has the \EP, for every Corson compactum. In Theorem \ref{CorsonIndependent}, we establish that the existence of a Corson compactum $K$ such that $C(K)$ does not have the \EP\ is independent from the axioms of ZFC. Another interesting result presented here is the characterization of the \EP\ for spaces of continuous functions on scattered compact spaces with small height. More precisely, in Theorem \ref{Scatteredc0EPiiMono} we show that if $K$ is a scattered compact space with height at most $\omega+1$, then $C(K)$ has the \EP\ if and only if $K$ is monolithic. It is worthwhile to compare this result with a similar characterization of the \EP\ in the context of compact lines obtained in \cite[Theorem~2.2]{SobczykLine}.
\end{section}

\begin{section}{General Results}
\label{sec:GeneralResults}

In this work we consider only real Banach spaces. An apparently stronger property than the \EP\ is the $c_0$-extension property with some constant.

\begin{defin}
Let $X$ be a Banach space and $\lambda \ge 1$ be a real number. We say that $X$ has the {\it $c_0$-extension property with constant $\lambda$} ($\lambda$-\EP) if any bounded operator $T:Y \to c_0$, defined on a closed subspace $Y$ of $X$, admits a bounded extension $\tilde{T}: X \to c_0$ with $\Vert \tilde{T} \Vert \le \lambda \Vert T \Vert$.
\end{defin}

We observe that all known proofs of \EP\ are actually proofs of $2$-\EP. It was shown in \cite[Proposition~2.2]{c0EP} that every WCG Banach space has the $2$-\EP. Moreover, the proof of \cite[Theorem~2.2]{SobczykLine} establishes that if $K$ is a monolithic compact line, then $C(K)$ has the $2$-\EP.

In what follows we devote ourselves to the proof of Theorem \ref{monoc0EP} which states that if $X$ is a Banach space such that $B_{X^*}$ is $w^*$-monolithic, then $X$ has the $2$-\EP. This proof relies essentially on Proposition \ref{monoBallequiv} that establishes an interesting characterization of the monolithicity of the dual unit ball of a Banach space in terms of $\ell_\infty$-valued bounded operators. The key ingredient in the proof of Proposition \ref{monoBallequiv} is a translation of monolithicity in terms of absolute bipolar sets presented in Lemma \ref{monoBipolar}. Let us recall some terminology and facts.
Given a real normed space $X$, there is a natural bilinear pairing of $X$ and $X^*$ given by:
\[X \times X^* \ni (x, \alpha) \to \alpha(x) \in \R.\]
Given a subset $A$ of $X^*$ the {\it absolute polar} $A^0$ of $A$, with respect to this bilinear pairing, is defined as:
\[A^0=\{x \in X: \vert \alpha(x) \vert \le 1, \ \forall \alpha \in A\}\]
and given a subset $B$ of $X$ the {\it absolute polar} $B^0$ of $B$, with respect to this bilinear pairing, is defined as:
\[B^0=\{\alpha \in X^*: \vert \alpha(x) \vert \le 1, \ \forall x \in B\}.\]
If $A$ is a subset of $X^*$, then the {\it absolute bipolar} $A^{00}$ of $A$ is defined as $A^{00}=(A^0)^0$.
Here we are considering only this natural bilinear pairing.
The proof of Lemma \ref{monoBipolar} relies on the deep relationship between the absolutely convex hull of subsets of $X^*$ and their absolute bipolars that is given by the {\em Absolute bipolar theorem} \cite[Theorem~3.1.1]{Bogachev}. For this particular bilinear pairing, the Absolute bipolar theorem ensures that $A^{00}$ coincides with the $w^*$-closure of the absolutely convex hull of $A$, for every subset $A$ of $X^*$.
Recall that if $A$ is a subset of $X^*$, then the {\it absolutely convex hull of $A$}, denoted here by $\absconv(A)$, is the smallest absolutely convex subset of $X^*$ containing $A$. It is easy to see that:
\[\absconv(A)=\big\{\sum_{i=1}^{n} t_i a_i: n \ge 1, a_i \in A, t_i \in \R \ \text{and $\sum_{i=1}^{n} \vert t_i \vert \le 1$}\big\}.\]

\begin{lem}\label{monoBipolar}
Let $X$ be a Banach space. The following conditions are equivalent:
\begin{enumerate}
\item[(i)] $B_{X^*}$ is $w^*$-monolithic.

\item[(ii)] $\overline{\absconv(A)}^{w^*}$ is $w^*$-metrizable, for every countable subset $A$ of $B_{X^*}$.

\item[(iii)] $A^{00}$ is $w^*$-metrizable, for every countable subset $A$ of $B_{X^*}$.

\end{enumerate}
\end{lem}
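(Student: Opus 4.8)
The plan is to notice that conditions (ii) and (iii) coincide, and then to prove the genuine equivalence (i) $\Leftrightarrow$ (iii). For the first point, the Absolute bipolar theorem quoted above gives $A^{00}=\overline{\absconv(A)}^{w^*}$ for every subset $A$ of $X^*$, so (ii) and (iii) assert literally the same thing and there is nothing to prove. For the rest I will use throughout that $(B_{X^*},w^*)$ is compact Hausdorff by Banach--Alaoglu, so that on its subspaces second countability and metrizability agree, and I will apply the definition of monolithicity directly: every $w^*$-separable subspace of $B_{X^*}$ is second countable.

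For (iii) $\Rightarrow$ (i), I would take an arbitrary $w^*$-separable subspace $S$ of $B_{X^*}$ and choose a countable $w^*$-dense subset $D$ of $S$. Since $D\subseteq\absconv(D)$, passing to $w^*$-closures gives $S\subseteq\overline{D}^{w^*}\subseteq\overline{\absconv(D)}^{w^*}=D^{00}$. By (iii) the set $D^{00}$ is $w^*$-metrizable, hence so is its subspace $S$; being separable and metrizable, $S$ is second countable. Therefore $B_{X^*}$ is $w^*$-monolithic.

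The substantial direction is (i) $\Rightarrow$ (iii). Given a countable $A\subseteq B_{X^*}$, the idea is to realize $A^{00}=\overline{\absconv(A)}^{w^*}$ as the $w^*$-closure of a countable set, so that monolithicity applies. To this end I introduce the countable \emph{rational absolutely convex hull}
\[
\absconv_{\mathds Q}(A)=\Big\{\sum_{i=1}^{n} t_i a_i : n\ge 1,\ a_i\in A,\ t_i\in\mathds Q,\ \sum_{i=1}^{n}|t_i|\le 1\Big\}
\]
and prove that it is norm-dense in $\absconv(A)$: given real coefficients with $\sum|t_i|\le 1$, one approximates them by rationals $q_i$ that still satisfy $\sum|q_i|\le 1$, and then $\big\|\sum (t_i-q_i)a_i\big\|\le\sum|t_i-q_i|$ is small because each $\|a_i\|\le 1$. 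Consequently $A^{00}=\overline{\absconv_{\mathds Q}(A)}^{w^*}$. Since $B_{X^*}$ is absolutely convex we have $\absconv_{\mathds Q}(A)\subseteq B_{X^*}$, and since $B_{X^*}$ is $w^*$-compact (hence $w^*$-closed) we also have $A^{00}\subseteq B_{X^*}$. Thus $A^{00}$ is a $w^*$-separable subspace of $B_{X^*}$, and (i) forces it to be second countable, that is, $w^*$-metrizable.

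The main obstacle I anticipate is precisely the density claim in this last direction: approximating real coefficients by rational ones while preserving the inequality $\sum|t_i|\le 1$, so that the approximants stay inside $\absconv(A)$ rather than in a slightly larger absolutely convex set. Once this mild perturbation argument is carried out, the remainder rests only on the elementary facts that subspaces of second countable spaces are second countable and that separable metrizable spaces are second countable.
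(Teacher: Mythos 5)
Your proposal is correct and follows essentially the same route as the paper: identify (ii) and (iii) via the Absolute bipolar theorem, derive (i) from the metrizability of $\overline{\absconv(D)}^{w^*} \supseteq \overline{D}^{w^*}$ for a countable dense set $D$, and obtain (i) $\Rightarrow$ (iii) by observing that the countable set of rational absolutely convex combinations is $w^*$-dense in $A^{00} \subseteq B_{X^*}$, so that monolithicity applies. The paper's proof is merely terser, leaving the rational-approximation density claim and the Urysohn reformulation of monolithicity with less detail than you supply.
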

\begin{proof}
The equivalence between (ii) and (iii) follows directly from the Absolute bipolar theorem.
Observe that Uryshon's metrization theorem implies that a compact space $K$ is monolithic if and only if the closure of any countable subset of $K$ is metrizable.
Clearly (ii) implies (i). To see that (i) implies (ii), note that if $A$ is a countable subset of $X^*$, then $\overline{\absconv(A)}^{w^*}$ is $w^*$-separable. This separability follows from the fact that the countable set
\[\big\{\sum_{i=1}^{n} q_i a_i: n \ge 1, a_i \in A, q_i \in \mathbb Q \ \text{and $\sum_{i=1}^{n} \vert q_i \vert \le 1$}\big\}\]
is $w^*$-dense in $\overline{\absconv(A)}^{w^*}$.
\end{proof}

Let $X$ be a real normed space and $A$ be a bounded subset of $X^*$. We denote by $\rho_A: X \to [0,+\infty[$ the seminorn defined as:
\[\rho_A(x)=\sup_{\alpha \in A}\vert \alpha (x) \vert, \ \forall x \in X.\]

\begin{lem}\label{seminormSep}
Let $X$ be a Banach space. The following conditions are equivalent:
\begin{enumerate}
\item[(a)] $B_{X^*}$ is $w^*$-monolithic.

\item[(b)] $(X, \rho_{A})$ is separable, for every countable subset $A$ of $B_{X^*}$.
\end{enumerate}
\end{lem}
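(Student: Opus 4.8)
The plan is to reduce both implications to the characterization of monolithicity furnished by Lemma~\ref{monoBipolar}, namely that (a) holds if and only if $A^{00}$ is $w^*$-metrizable for every countable subset $A$ of $B_{X^*}$. The bridge between the seminorm $\rho_A$ and the bipolar $A^{00}$ is the elementary identity $\rho_A=\rho_{A^{00}}$, which I would record first: a direct estimate shows that $\sup_{\alpha\in A}|\alpha(x)|$ is unchanged when $A$ is replaced by $\absconv(A)$ (since $|\sum t_i\alpha_i(x)|\le\sum|t_i|\,|\alpha_i(x)|\le\sup_{\alpha\in A}|\alpha(x)|$ whenever $\sum|t_i|\le 1$), and the $w^*$-continuity of $\alpha\mapsto|\alpha(x)|$ shows it is unchanged under $w^*$-closure. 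Since $A\subseteq B_{X^*}$, the set $K:=A^{00}=\overline{\absconv(A)}^{w^*}$ is $w^*$-closed and norm-bounded, hence $w^*$-compact by the Banach--Alaoglu theorem.

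Next I would introduce the evaluation operator $J\colon X\to C(K)$, where $K$ carries the $w^*$-topology, defined by $J(x)(\alpha)=\alpha(x)$. Each $J(x)$ is $w^*$-continuous on $K$, and $\Vert J(x)\Vert_\infty=\sup_{\alpha\in K}|\alpha(x)|=\rho_{A^{00}}(x)=\rho_A(x)$ by the identity above. Thus $J$ is a seminorm isometry of $(X,\rho_A)$ onto the linear subspace $J(X)$ of $C(K)$, so that $(X,\rho_A)$ is separable if and only if $J(X)$ is separable.

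For the implication (a)$\Rightarrow$(b), I would fix a countable $A\subseteq B_{X^*}$ and use Lemma~\ref{monoBipolar} to obtain that $K=A^{00}$ is $w^*$-metrizable. Then $C(K)$ is separable by the classical fact that $C(K)$ is separable whenever $K$ is a metrizable compact space \cite[Lemma~3.102]{Fabian}, whence its subspace $J(X)$ is separable and, by the isometry above, so is $(X,\rho_A)$.

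For the converse (b)$\Rightarrow$(a), fix a countable $A\subseteq B_{X^*}$ and a countable $\rho_A$-dense set $D\subseteq X$, which exists by (b). The key step — and the one I expect to require the most care — is to verify that the countable family $\{J(x):x\in D\}$ separates the points of $K$: given distinct $\alpha,\beta\in K$, pick $x\in X$ with $\delta:=|\alpha(x)-\beta(x)|>0$ and then $x'\in D$ with $\rho_A(x-x')<\delta/2$; since $\alpha,\beta\in K=A^{00}$ we have $|\alpha(x-x')|,|\beta(x-x')|\le\rho_A(x-x')<\delta/2$, which forces $\alpha(x')\neq\beta(x')$. A compact Hausdorff space admitting a countable separating family of continuous functions is metrizable, since the resulting map into $\R^{\mathbb N}$ is a continuous injection from a compact space into a metrizable one; hence $K=A^{00}$ is $w^*$-metrizable. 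As $A$ was an arbitrary countable subset of $B_{X^*}$, Lemma~\ref{monoBipolar} yields (a). The only genuine subtlety throughout is that $\rho_A$ is a seminorm rather than a norm, but this never obstructs the density or separation arguments.
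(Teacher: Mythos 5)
Your proof is correct. It shares with the paper the same overall reduction: both arguments pass through Lemma~\ref{monoBipolar} and aim at the equivalence of (b) with the $w^*$-metrizability of $A^{00}$ for countable $A$. Where you diverge is in how that equivalence is established. The paper's proof consists of a single identification -- $A^{00}$ is exactly the closed unit ball of $(X,\rho_A)^*$ -- after which it cites the classical fact that a normed space is separable if and only if its dual unit ball is $w^*$-metrizable \cite[Proposition~3.103]{Fabian}. You instead prove both directions by hand: you record the identity $\rho_A=\rho_{A^{00}}$, embed $(X,\rho_A)$ isometrically into $C(A^{00})$ via evaluation, deduce (a)$\Rightarrow$(b) from the separability of $C(K)$ for metrizable compact $K$, and deduce (b)$\Rightarrow$(a) by showing that the images of a countable $\rho_A$-dense set form a countable separating family of continuous functions on $A^{00}$, which forces metrizability. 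In effect you have unpacked the proof of the cited duality fact rather than invoking it. What your version buys is self-containedness and a clean handling of the point that $\rho_A$ is only a seminorm (the cited proposition is stated for normed spaces, so the paper's appeal to it implicitly requires passing to the quotient by $\rho_A^{-1}(0)$, a step your evaluation-map argument sidesteps entirely); what the paper's version buys is brevity and an explicit description of $A^{00}$ as a dual ball, which makes the role of the bipolar transparent. All the individual steps of your argument check out, including the separation estimate $|\alpha(x')-\beta(x')|\ge\delta-|\alpha(x-x')|-|\beta(x-x')|>0$ and the use of $|\gamma(y)|\le\rho_{A^{00}}(y)=\rho_A(y)$ for $\gamma\in A^{00}$.
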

\begin{proof}
To prove that $(a)$ and $(b)$ are equivalent it suffices to show the equivalence between $(b)$ and condition $(iii)$ of Lemma \ref{monoBipolar}. Towards this goal, we will show that $A^{00}$ coincides with the closed unit ball of the dual space $(X,\rho_A)^*$, for any subset $A$ of $B_{X^*}$. The result will then follow from the fact that $(X, \rho_A)$ is separable if and only if the closed unit ball of its dual space is $w^*$-metrizable \cite[Proposition~3.103]{Fabian}. Let $A$ be a subset of $B_{X^*}$ and note that:
\[A^{00}=\{\alpha \in X^*: \vert \alpha(x) \vert \le 1, \ \forall x \in B_{(X, \rho_A)}\},\]
where $B_{(X, \rho_A)}$ denotes the closed unit ball of $(X, \rho_A)$.
Finally, to conclude the result just observe that $A^{00} \subset (X, \rho_A)^*$ and that $(X, \rho_A)^* \subset X^*$.
\end{proof}

Recall that if $X$ is a real normed space, then there exists a bijective correspondence between bounded operators $X \to \ell_\infty$ and bounded sequences in $X^*$.
More precisely, if $S:X \to \ell_\infty$ is a bounded operator, then the {\it sequence associated to $S$} is $(\pi_n \circ S)_{n \ge 1}$, where $\pi_n: \ell_\infty \to \R$ denotes the $n^{th}$ projection. Conversely if $(\alpha_n)_{n \ge 1}$ is a bounded sequence in $X^*$, then the {\it operator associated} to this sequence is defined as $S(x)=\big(\alpha_n(x)\big)_{n \ge 1}$, for every $x \in X$. Note that if $(\alpha_n)_{n \ge 1}$ is the sequence associated to a bounded operator $S:X \to \ell_\infty$, then $\Vert S \Vert=\sup_{n \ge 1} \Vert \alpha_n \Vert$. It follows from this correspondence and Hahn-Banach's theorem that every bounded $\ell_\infty$-valued operator defined on a subspace of $X$ admits a bounded $\ell_\infty$-valued extension defined on $X$ with the same norm. Any of those extensions is called a {\it Hahn-Banach extension} of the original operator. Given an operator $S$, we denote its image by $\Img S$.

\begin{prop}\label{monoBallequiv}
Let $X$ be a Banach space. The following conditions are equivalent:
\begin{enumerate}
\item $B_{X^*}$ is $w^*$-monolithic.

\item $\Img S$ is separable, for every bounded operator $S:X \to \ell_\infty$.
\end{enumerate}
\end{prop}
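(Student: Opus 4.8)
The plan is to lean on Lemma~\ref{seminormSep}, which already characterizes the $w^*$-monolithicity of $B_{X^*}$ through separability of the seminormed spaces $(X, \rho_A)$ for countable $A \subseteq B_{X^*}$. The only new ingredient needed is the operator-sequence dictionary recalled just before the statement, which turns those seminorms into $\ell_\infty$-valued operators. Concretely, if $S : X \to \ell_\infty$ is a bounded operator with associated sequence $(\alpha_n)_{n \ge 1}$ and $A = \{\alpha_n : n \ge 1\}$, then for every $x \in X$ one has
\[
\Vert S(x)\Vert_\infty = \sup_{n \ge 1} \vert \alpha_n(x)\vert = \rho_A(x).
\]
Thus $S$ acts, in effect, as a $\rho_A$-to-$\Vert\cdot\Vert_\infty$ isometry of $X$ onto $\Img S$, carrying the seminorm $\rho_A$ exactly onto the supremum norm of the image.

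From this identity I would extract the key observation that $(X, \rho_A)$ is separable if and only if $\Img S$ is separable. If $D \subseteq X$ is $\rho_A$-dense, then $S(D)$ is $\Vert\cdot\Vert_\infty$-dense in $\Img S$, since $\Vert S(x) - S(d)\Vert_\infty = \rho_A(x - d)$; conversely, choosing for each point of a countable dense subset of $\Img S$ a preimage under $S$ yields a countable $\rho_A$-dense subset of $X$ by the same identity. Both directions preserve countability, so separability transfers each way.

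With this in hand the two implications are immediate. For $(1)\Rightarrow(2)$, given a bounded $S : X \to \ell_\infty$ I may assume $S \neq 0$ and replace $S$ by $S/\Vert S\Vert$, so that, using $\Vert S\Vert = \sup_{n}\Vert\alpha_n\Vert$, its associated sequence lies in $B_{X^*}$ and $A = \{\alpha_n\}$ is a countable subset of $B_{X^*}$; Lemma~\ref{seminormSep} then gives separability of $(X, \rho_A)$, whence $\Img S$ is separable. For $(2)\Rightarrow(1)$, given a countable $A = \{\alpha_n\} \subseteq B_{X^*}$ I would form the associated operator $S(x) = \big(\alpha_n(x)\big)_{n \ge 1}$, which is bounded with $\Vert S\Vert \le 1$ since each $\Vert\alpha_n\Vert \le 1$; by hypothesis $\Img S$ is separable, hence so is $(X, \rho_A)$, and Lemma~\ref{seminormSep} yields the $w^*$-monolithicity of $B_{X^*}$.

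The proof carries essentially no analytic difficulty: all the substance is already packaged in Lemma~\ref{seminormSep}, and the present step is a clean translation through the operator-sequence correspondence. The only points requiring minor care are the normalization by $\Vert S\Vert$ (to land inside $B_{X^*}$) and the bookkeeping when $A$ is finite, where one simply enumerates $A$ as a sequence, padding with zeros if necessary, so that $\rho_A$ is unchanged. I expect no genuine obstacle beyond this routine matching of countable subsets of $B_{X^*}$ with bounded sequences.
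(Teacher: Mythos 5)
Your proposal is correct and follows essentially the same route as the paper: both directions reduce to Lemma~\ref{seminormSep} via the operator--sequence correspondence and the identity $\Vert S(x)\Vert_\infty=\rho_A(x)$, with the same normalization of $S$ in the forward direction and the same transfer of a countable dense set through $S$ in the reverse direction. No gaps.
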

\begin{proof}
Assume $(1)$ and let $S:X \to \ell_\infty$ be a bounded operator. Without loss of generality, we may assume that $\Vert S \Vert=1$. Denote by $(\alpha_n)_{n \ge 1}$ the sequence associated to $S$ and set $A=\{\alpha_n: n \ge 1\}$. Since $A \subset B_{X^*}$, Lemma \ref{seminormSep} ensures that $(X,\rho_A)$ is separable. The separability of $\Img S$ follows from the fact that the onto operator $S:(X, \rho_A) \to \Img S$ is bounded and therefore continuous. Now let us prove that $(2)$ implies condition (b) of Lemma \ref{seminormSep}. Let $A$ be a countable subset of $B_{X^*}$ and enumerate $A=\{\alpha_n: n \ge1\}$. Denote by $S:X \to \ell_\infty$ the bounded operator associated to the sequence $(\alpha_n)_{n \ge 1}$. Let $E$ be a countable and dense subset of $\Img S$ given by $(2)$ and denote by $D$ a countable subset of $X$ such that $S[D]=E$. It is easy to see that $D$ is dense in $(X, \rho_A)$. Indeed, fixed $x \in X$ and $\epsilon>0$, the density of $E$ in $\Img S$ implies that there exists $e \in E$ with $\Vert S(x)-e \Vert< \epsilon$. If $d \in D$ satisfies $S(d)=e$, then:
\[\rho_A(x-d)=\sup_{n \ge 1} \vert \alpha_n(x-d) \vert=\Vert S(x-d) \Vert=\Vert S(x)-e \Vert< \epsilon.\]

\end{proof}

\begin{teo}\label{monoc0EP}
Let $X$ be a Banach space. If $B_{X^*}$ is $w^*$-monolithic, then $X$ has the $2$-\EP.
\end{teo}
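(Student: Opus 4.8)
The plan is to adapt Veech's proof of Sobczyk's theorem, the only real difficulty being that $(B_{X^*},w^*)$ need not be metrizable; monolithicity will let me confine the entire argument to a metrizable $w^*$-compact subset. Fix a closed subspace $Y \subseteq X$ and a bounded operator $T:Y \to c_0$; normalizing, I assume $\|T\|=1$. Let $\beta_n = \pi_n \circ T \in Y^*$ be the associated sequence, so $\|\beta_n\| \le 1$ and $\beta_n(y) \to 0$ for every $y \in Y$. Using Hahn--Banach, I extend each $\beta_n$ to $\tilde\beta_n \in X^*$ with $\|\tilde\beta_n\| \le 1$, and put $A=\{\tilde\beta_n : n \ge 1\} \subseteq B_{X^*}$. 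Since $A$ is countable, Lemma \ref{monoBipolar} gives that $K:=A^{00}$ is $w^*$-metrizable; as $A \subseteq B_{X^*}$ forces $A^{00} \subseteq B_{X^*}$, the set $K$ is a $w^*$-compact metrizable subset of $B_{X^*}$ containing every $\tilde\beta_n$. I then fix a metric $d$ inducing the $w^*$-topology on $K$.

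Next I would run the Veech argument inside $K$. Let $L \subseteq K$ be the set of $w^*$-cluster points of the sequence $(\tilde\beta_n)$; it is nonempty, $w^*$-closed, hence $w^*$-compact. For $y \in Y$ the numbers $\tilde\beta_n(y) = \beta_n(y)$ tend to $0$, so every $\mu \in L$ satisfies $\mu(y)=0$; thus $L$ is contained in the annihilator $Y^{\perp}=\{\alpha \in X^*: \alpha|_Y = 0\}$. A standard fact about compact metric spaces --- namely that if a sequence has cluster-point set $L$ then $d(\tilde\beta_n, L) \to 0$ (otherwise a subsequence would stay $\epsilon$-away from $L$ while subconverging to a point of $L$) --- lets me choose, using compactness of $L$, a nearest point $\gamma_n \in L$ with $d(\tilde\beta_n,\gamma_n) = d(\tilde\beta_n,L) \to 0$.

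Finally I would define $\tilde T:X \to \ell_\infty$ by $\tilde T(x)=\big((\tilde\beta_n-\gamma_n)(x)\big)_{n\ge1}$. Since $\gamma_n \in K \subseteq B_{X^*}$ we have $\|\tilde\beta_n-\gamma_n\|\le 2$, whence $\|\tilde T\|\le 2$. For each fixed $x \in X$ the evaluation $\alpha \mapsto \alpha(x)$ is $w^*$-continuous on the compact metric space $K$, hence uniformly $d$-continuous, so $d(\tilde\beta_n,\gamma_n)\to 0$ yields $(\tilde\beta_n-\gamma_n)(x)\to 0$; therefore $\tilde T$ actually maps into $c_0$. Because each $\gamma_n \in Y^{\perp}$, for $y \in Y$ one has $(\tilde\beta_n-\gamma_n)(y)=\beta_n(y)$, so $\tilde T|_Y = T$. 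Thus $\tilde T$ is a $c_0$-valued extension of $T$ with $\|\tilde T\| \le 2 = 2\|T\|$, establishing the $2$-\EP. The main obstacle is exactly this reduction to a metrizable compact set: without it the sequence $(\tilde\beta_n)$ has no reason for its cluster points to be approached in a way that produces a pointwise-null correction, and supplying such a set is precisely what Lemma \ref{monoBipolar} (equivalently Proposition \ref{monoBallequiv}) provides.
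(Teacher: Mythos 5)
Your proof is correct, but it takes a genuinely different route from the paper's. The paper reduces to Sobczyk's theorem as a black box: it takes a Hahn--Banach extension $S:X\to\ell_\infty$ of $T$, invokes Proposition \ref{monoBallequiv} (itself built on Lemma \ref{monoBipolar}) to conclude that $\Img S$ is separable, and then applies Sobczyk to extend the inclusion $\overline{\Img T}\to c_0$ to a map $L$ on $\overline{\Img S}$ with $\Vert L\Vert\le 2$, so that $L\circ S$ is the desired extension. You instead rerun Veech's argument from scratch inside the $w^*$-compact metrizable set $K=A^{00}$ supplied by Lemma \ref{monoBipolar}(iii): the cluster-point set $L\subset Y^\perp$, the fact that $d(\tilde\beta_n,L)\to 0$ in a compact metric space, the nearest-point correction $\gamma_n$, and uniform continuity of point evaluations on $K$ all check out, and the constant $2$ arises from $\Vert\tilde\beta_n-\gamma_n\Vert\le 2$ rather than from Sobczyk's constant. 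Both arguments rest on the same pivot --- metrizability of the bipolar of a countable subset of $B_{X^*}$ --- but your version is self-contained (it reproves Sobczyk along the way, recovering the classical case when $X$ is separable and $K=B_{X^*}$), whereas the paper's detour through Proposition \ref{monoBallequiv} buys an equivalence of independent interest ($w^*$-monolithicity of $B_{X^*}$ is the same as separability of the range of every bounded operator $X\to\ell_\infty$), which is why the author organizes the proof that way. One cosmetic remark: you use the letter $L$ both for the cluster-point set and (implicitly, via the paper's notation) nowhere else conflicting, but since the paper's proof uses $L$ for the Sobczyk extension operator, a different symbol would avoid confusion if the two texts are read side by side.
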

\begin{proof}
Let $Y$ be a closed subspace of $X$ and $T:Y \to c_0$ be a bounded operator. If $S: X \to \ell_\infty$ denotes a Hahn-Banach extension of $T$, then Proposition \ref{monoBallequiv} ensures that $\Img S$ is a separable subspace of $\ell_\infty$. Therefore $\overline{\Img T}$ is a closed subspace of the separable Banach space $\overline{\Img S}$. It follows from Sobczyk's theorem that the inclusion map $i: \overline{\Img T} \to c_0$ admits a bounded extension $L: \overline{\Img S} \to c_0$ with $\Vert L \Vert \le 2 \Vert i \Vert \le 2$. The map $L \circ S: X \to c_0$ is a bounded extension of $T$ and $\Vert L \circ S \Vert \le \Vert S \Vert \Vert L \Vert\le 2 \Vert T\Vert$.
\end{proof}

We do not know if the converse of Theorem \ref{monoc0EP} holds.

\begin{cor}
If $X$ is a WLD Banach space, then $X$ has the $2$-\EP.
\end{cor}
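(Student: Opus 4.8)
The plan is to deduce the corollary directly from Theorem~\ref{monoc0EP}. By that theorem it suffices to verify the single implication that if $X$ is WLD, then $B_{X^*}$ is $w^*$-monolithic; once this is in hand, the conclusion follows exactly as in the proof of Theorem~\ref{monoc0EP}, by taking a Hahn--Banach $\ell_\infty$-valued extension, applying Sobczyk's theorem to its separable range, and composing. So the entire content is to confirm the $w^*$-monolithicity of the dual ball.

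First I would reduce the problem to a purely topological statement. As recalled in the introduction, a Banach space $X$ is WLD if and only if $(B_{X^*}, w^*)$ is a Corson compactum. Hence the only thing left to check is that every Corson compactum is monolithic, since this immediately yields that $B_{X^*}$ is $w^*$-monolithic and places us squarely in the hypothesis of Theorem~\ref{monoc0EP}.

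To see that a Corson compactum $K \subseteq \Sigma(I)$ is monolithic, I would take a separable subspace $S \subseteq K$ with countable dense subset $D$ and set $J = \bigcup_{d \in D} \supp(d)$, which is countable. For each coordinate $i \notin J$ the projection $\pi_i$ is continuous and vanishes on $D$, hence on $\overline{S} = \overline{D}$; thus every point of $\overline{S}$ has support contained in the countable set $J$. Consequently $\overline{S}$ embeds into $\R^J$, which is metrizable, so $\overline{S}$ is a second countable (compact metrizable) subspace of $K$. This is precisely the monolithicity of $K$.

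There is no genuine obstacle in this argument: the corollary is a formal consequence of Theorem~\ref{monoc0EP} combined with the two standard facts that WLD is equivalent to the dual ball being a Corson compactum and that Corson compacta are monolithic. The only mildly technical ingredient is the support computation above establishing the implication Corson $\Rightarrow$ monolithic, and even this is classical.
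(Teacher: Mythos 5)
Your proof is correct and follows the same route as the paper: apply Theorem~\ref{monoc0EP} after noting that WLD means $(B_{X^*},w^*)$ is Corson and that Corson compacta are monolithic. The paper simply cites the latter fact without proof, whereas you supply the standard support argument; the substance is identical.
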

\begin{proof}
It follows from Theorem \ref{monoc0EP} and the fact that every Corson compactum is monolithic.
\end{proof}

Now let us see some stability properties of the class of Banach spaces with the \EP.

\begin{prop}\label{fechamentosEP}
Let $X$ and $Z$ be Banach spaces and assume that $X$ has the \EP.
\begin{enumerate}
\item[(a)] If $Z$ is a subspace of $X$, then $Z$ has the \EP.

\item[(b)] If $Q:X \to Z$ is a bounded and onto operator, then $Z$ has the \EP.
\end{enumerate}
\end{prop}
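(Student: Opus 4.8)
The plan is to prove each part by reducing an extension problem on the target space to an extension problem on $X$, where the \EP\ is available.

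For part (a), I would first observe that since $Z$ is a Banach space and a subspace of $X$, it is complete and hence closed in $X$; consequently every closed subspace $W$ of $Z$ is also a closed subspace of $X$. Given a bounded operator $T:W \to c_0$, the \EP\ of $X$ provides a bounded extension $S:X \to c_0$, and the restriction $S|_Z:Z \to c_0$ is then a bounded extension of $T$ defined on $Z$. This settles (a) immediately, with no constant lost.

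For part (b) the situation is more delicate, since we must transfer the property from $X$ down to the image space $Z$. Fix a closed subspace $W$ of $Z$ and a bounded operator $T:W \to c_0$. Set $V=Q^{-1}(W)$, which is a closed subspace of $X$ by continuity of $Q$, and note that $Q$ maps $V$ into $W$, so $T \circ (Q|_V):V \to c_0$ is a well-defined bounded operator. Since $X$ has the \EP, this operator admits a bounded extension $S:X \to c_0$. The natural candidate for the desired extension of $T$ is the operator $\tilde{T}:Z \to c_0$ obtained by factoring $S$ through $Q$; but such a factorization requires $S$ to vanish on $\Ker Q$, and guaranteeing this is exactly the main obstacle, because the \EP\ gives no a priori control on the behaviour of $S$ on $\Ker Q$.

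The observation that resolves the obstacle is that $\Ker Q \subseteq V$: indeed, $Q$ sends $\Ker Q$ to $0 \in W$, so $\Ker Q \subseteq Q^{-1}(W)=V$. Because $T \circ (Q|_V)$ vanishes on $\Ker Q$ and $S$ agrees with it on all of $V \supseteq \Ker Q$, the extension $S$ automatically vanishes on $\Ker Q$. Hence $S$ factors as $S=\tilde{T} \circ Q$ for a well-defined linear map $\tilde{T}:Z \to c_0$, where I would use that $Q$ is onto to define $\tilde{T}$ on all of $Z$ and that $S$ kills $\Ker Q$ to make the value independent of the chosen preimage. Boundedness of $\tilde{T}$ follows from the open mapping theorem: since $Q$ is open, every $z \in Z$ admits a preimage $x$ with $\Vert x \Vert \le c \Vert z \Vert$ for some fixed constant $c$, giving $\Vert \tilde{T}(z) \Vert = \Vert S(x) \Vert \le c \Vert S \Vert \, \Vert z \Vert$. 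Finally, for $w \in W$ one checks $\tilde{T}(w)=T(w)$ by choosing a preimage of $w$ inside $V$, so $\tilde{T}$ extends $T$ and $Z$ has the \EP.
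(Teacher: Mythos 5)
Your proposal is correct and follows essentially the same route as the paper: part (a) by extending to $X$ and restricting, and part (b) by extending $T\circ Q|_{Q^{-1}[W]}$ to $S$ on $X$ and factoring $S$ through $Q$, the key point in both arguments being that $\Ker Q\subseteq Q^{-1}[W]$ forces $S$ to vanish on $\Ker Q$. Your additional details (closedness of $Z$ in $X$, and boundedness of $\tilde T$ via the open mapping theorem) are exactly the steps the paper leaves as ``easy to see.''
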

\begin{proof}
The proof of (a) is straightforward. To prove (b), let $Y$ be a closed subspace of $Z$ and let $T:Y \to c_0$ be a bounded operator. Since $X$ has the \EP, the bounded operator $T \circ Q|_{Q^{-1}[Y]}$ admits a bounded extension $S: X \to c_0$. Define $\tilde{T}: Z \to c_0$ as $\tilde{T}(z)=S(x)$, where $x$ is any element of $X$ satisfying $Q(x)=z$. The fact that $\tilde{T}$ is well-defined follows from the fact that $\Ker Q$ is contained in $Q^{-1}[Y]$. Indeed, if $x_1$ and $x_2$ are elements of $X$ with $Q(x_1)=Q(x_2)$, then
$x_1-x_2 \in \Ker Q \subset Q^{-1}[Y]$. Therefore, we have:
\[S(x_1-x_2)=T(Q(x_1-x_2))=0.\]
It is easy to see that the map $\tilde{T}$ is a bounded extension of $T$.
\end{proof}

Next proposition states that the $1$-Plichko space $\ell_1(I)$ does not have the \EP, for any uncountable set $I$. We present its proof after Remark \ref{valdiviaSemEP},
since it depends on results developed in Section \ref{C(K)spaces}.

\begin{prop}\label{l1SemEP}
If $I$ is an uncountable set, then $\ell_1(I)$ does not have the \EP.
\end{prop}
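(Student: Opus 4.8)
The plan is to deduce the failure of the \EP\ for $\ell_1(I)$ from the failure of the \EP\ for a suitable quotient, using the quotient-stability statement in Proposition~\ref{fechamentosEP}(b). The mechanism is the universality of $\ell_1(I)$ for quotients: $\ell_1(I)$ admits a norm-one surjection onto \emph{every} Banach space $Z$ of density character at most $|I|$. Indeed, if $\{z_\gamma\}_\gamma$ is dense in the unit ball of $Z$ and indexed by a set of size at most $|I|$, the operator sending the canonical basis vectors of $\ell_1(I)$ to the $z_\gamma$ (and the remaining basis vectors to $0$) is a norm-one surjection. Since $I$ is uncountable we have $|I|\ge\omega_1$, so it suffices to produce a single Banach space of density $\omega_1$ without the \EP; by Proposition~\ref{fechamentosEP}(b), its being a quotient of $\ell_1(I)$ then forces $\ell_1(I)$ to fail the \EP\ as well.

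For this witness I would take $Z=C(K)$, where $K$ is a scattered, non-monolithic compactum of small height and weight $\omega_1$. A concrete ZFC example is the one-point compactification $K=\Psi(\mathcal A)\cup\{\infty\}$ of the Mr\'owka $\Psi$-space built from an almost disjoint family $\mathcal A$ on $\omega$ with $|\mathcal A|=\omega_1$ (such families exist in ZFC). One checks that $K$ is scattered with Cantor--Bendixson height $3\le\omega+1$: the isolated points are exactly the points of $\omega$, so the first derived set is $\mathcal A\cup\{\infty\}$, in which each $A\in\mathcal A$ is isolated, whence the second derived set is $\{\infty\}$ and the third is empty. Its weight equals $\omega_1$ (the closed discrete subspace $\mathcal A$ of size $\omega_1$ forces the weight to be at least $\omega_1$), so $\mathrm{dens}\,C(K)=w(K)=\omega_1$. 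Crucially, the countable set $\omega$ of isolated points is dense in $K$, so $K$ is separable while not second countable; hence $K$ is \emph{not} monolithic.

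Now Theorem~\ref{Scatteredc0EPiiMono}, which characterizes the \EP\ for $C(K)$ with $K$ scattered of height at most $\omega+1$ as equivalent to monolithicity of $K$, shows that $C(K)$ does not have the \EP. Since $\mathrm{dens}\,C(K)=\omega_1\le|I|$, the space $C(K)$ is a quotient of $\ell_1(I)$ by the universality observed in the first paragraph, and Proposition~\ref{fechamentosEP}(b) then yields that $\ell_1(I)$ cannot have the \EP.

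The main obstacle is producing, in ZFC and by hand, a scattered compactum that is simultaneously non-monolithic and of height at most $\omega+1$: height-two scattered compacta, being one-point compactifications of discrete spaces, are always monolithic, so the example is necessarily of height three, which is exactly what the $\Psi$-space construction delivers. Secondary points to record are the identity $\mathrm{dens}\,C(K)=w(K)$ and the universality of $\ell_1(I)$ for quotients, both standard. If one wished to avoid invoking the results of Section~\ref{C(K)spaces}, the remaining work would be to construct directly a bounded $c_0$-valued operator on a closed subspace of $C(K)$ admitting no bounded $c_0$-valued extension, exploiting the uncountable almost disjoint family $\mathcal A$; this is where the genuine content of the argument lies.
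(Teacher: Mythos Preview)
Your proof is correct and follows essentially the same strategy as the paper's: both invoke Proposition~\ref{fechamentosEP}(b) together with the universality of $\ell_1(I)$ for quotients, and then exhibit a $C(K)$ space of small density lacking the \EP. The only difference is the choice of witness. The paper uses $C(2^{I})$ and appeals to Proposition~\ref{2kappanoEP}; but that proposition is in turn proved by embedding a separable, nonmetrizable, scattered height-$3$ compactum into $2^{\omega_1}$ and invoking Lemma~\ref{MrowkaSemEP}. Your argument simply bypasses the detour through $2^{\omega_1}$ and works with the Mr\'owka compactum directly. One small remark: you could cite Lemma~\ref{MrowkaSemEP} rather than the more general Theorem~\ref{Scatteredc0EPiiMono}, since your $K$ is precisely a separable nonmetrizable scattered compactum of height~$3$.
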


\begin{cor}\label{copial1SemEP}
Let $X$ be a Banach space and $I$ be an uncountable set. If $X$ contains an isomorphic copy of $\ell_1(I)$, then $X$ does not have the \EP.
\end{cor}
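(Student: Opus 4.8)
The plan is to derive a contradiction from the assumption that $X$ has the \EP\ by transporting this property first to the embedded copy of $\ell_1(I)$ and then invoking Proposition \ref{l1SemEP}. The two stability statements collected in Proposition \ref{fechamentosEP} are precisely the tools needed, so the argument should be quite short; essentially the whole content is the isomorphic invariance of the \EP, which is already packaged there.

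First I would fix an isomorphism $\Phi:\ell_1(I)\to X$ onto its image and set $Z=\Phi\big[\ell_1(I)\big]$. Since $\Phi$ has a bounded inverse and $\ell_1(I)$ is complete, $Z$ is a closed subspace of $X$: a Cauchy sequence in $Z$ corresponds under $\Phi^{-1}$ to a Cauchy sequence in $\ell_1(I)$, whose limit is carried by $\Phi$ back into $Z$. This is the only point that deserves a moment's attention, and it is what makes part (a) of Proposition \ref{fechamentosEP} applicable. Assuming that $X$ has the \EP, that part, applied to the subspace $Z$, yields that $Z$ has the \EP.

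Next I would observe that $\Phi^{-1}:Z\to\ell_1(I)$ is a bounded and onto operator, so part (b) of Proposition \ref{fechamentosEP} transfers the \EP\ from $Z$ to $\ell_1(I)$. Because $I$ is uncountable, this contradicts Proposition \ref{l1SemEP}, and therefore $X$ cannot have the \EP. I do not expect any genuine obstacle here: the two nonformal steps are recognizing that an embedded isomorphic copy of a Banach space is automatically a closed subspace, so that part (a) applies, and that the inverse isomorphism is a legitimate bounded surjection, so that part (b) applies.
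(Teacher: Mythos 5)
Your proposal is correct and follows essentially the same route as the paper, which simply cites Proposition \ref{fechamentosEP}(a) together with Proposition \ref{l1SemEP}. Your extra step of invoking Proposition \ref{fechamentosEP}(b) for the inverse isomorphism just makes explicit the isomorphic invariance of the \EP\ that the paper leaves implicit, and your remark about the closedness of the embedded copy is a correct (if routine) justification for applying part (a).
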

\begin{proof}
It follows from Propositions \ref{fechamentosEP}(a) and \ref{l1SemEP}.
\end{proof}

It is interesting to observe that even though not every $1$-Plichko space has the \EP, these spaces have a weaker property that was also introduced in \cite{SobczykLine}; namely the separable $c_0$-extension property. We say that a Banach space $X$ has the {\it separable $c_0$-extension property} (separable \EP) if every $c_0$-valued bounded operator defined on a closed and separable subspace of $X$ admits a $c_0$-valued bounded extension defined on $X$. The fact that every $1$-Plichko space has the separable \EP\ follows from the fact that such spaces have the separable complementation property (SCP) \cite[pg. 105]{Biorthogonal} and clearly the SCP implies the separable \EP. Note that Proposition \ref{l1SemEP} provides an example of a space with the SCP but not the \EP. A stronger property than the SCP is the controlled separable complementation property (CSCP) that was introduced in \cite{Woj} and studied in a series of papers \cite{Ferrer1, Ferrer2, Ferrer3}. A surprising consequence of Theorem \ref{monoc0EP} is that the CSCP implies the \EP.

\begin{prop}
Let $X$ be a Banach space. If $X$ has the CSCP, then $X$ has the $2$-\EP.
\end{prop}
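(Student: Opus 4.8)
The plan is to reduce everything to Theorem~\ref{monoc0EP}: I will show that the CSCP forces $B_{X^*}$ to be $w^*$-monolithic, and the $2$-\EP\ then follows at once. To obtain $w^*$-monolithicity I would invoke the characterization in Lemma~\ref{seminormSep}, which reduces the task to proving that $(X, \rho_A)$ is separable for an arbitrary countable subset $A = \{\alpha_n : n \ge 1\}$ of $B_{X^*}$.

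Fix such an $A$. The key move is to exploit the control on the dual side that is built into the CSCP: applied to the countable set $A \subseteq X^*$, it produces a separable subspace $Z \subseteq X$ together with a bounded projection $P : X \to Z$ such that each $\alpha_n$ belongs to $P^*(X^*)$. Because $P$ is idempotent, this membership is equivalent to the factoring identity $\alpha_n = \alpha_n \circ P$: writing $\alpha_n = \beta_n \circ P$ and using $P^2 = P$ gives $\alpha_n \circ P = \beta_n \circ P \circ P = \beta_n \circ P = \alpha_n$. Consequently $\rho_A(x) = \sup_{n \ge 1} |\alpha_n(P(x))| = \rho_A(P(x))$ for every $x \in X$.

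From here the separability is immediate. I would take a countable norm-dense subset $D$ of the separable space $Z$, noting that $P(d) = d$ for every $d \in D$. Given $x \in X$ and $\epsilon > 0$, pick $d \in D$ with $\|P(x) - d\| < \epsilon$; since $A \subseteq B_{X^*}$ forces $\rho_A \le \|\cdot\|$, the factoring identity applied to $x - d$ yields
\[\rho_A(x - d) = \rho_A\big(P(x) - d\big) \le \|P(x) - d\| < \epsilon,\]
so that $D$ is $\rho_A$-dense in $X$ and $(X, \rho_A)$ is separable, as required.

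The hard part here is conceptual rather than computational: the whole argument hinges on the ``controlled'' feature of the CSCP, namely that the projection can be arranged so that the prescribed functionals $\alpha_n$ annihilate $\Ker P$. Plain SCP would still deliver a separable complemented subspace containing a prescribed separable piece, but without this extra control the seminorm $\rho_A$ need not factor through the projection, and the reduction to monolithicity would collapse. Everything else is entirely routine.
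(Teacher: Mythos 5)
Your proof is correct and follows the same route as the paper: both reduce the statement to Theorem~\ref{monoc0EP} by showing that the CSCP forces $B_{X^*}$ to be $w^*$-monolithic. The only difference is that the paper simply cites Kalenda--Kubi\'s for this implication, whereas you prove it directly via Lemma~\ref{seminormSep}; your argument for that step --- using the controlled projection $P$ with $\alpha_n\circ P=\alpha_n$ to get $\rho_A=\rho_A\circ P$ and then transporting a countable dense subset of the separable range of $P$ --- is a valid, self-contained substitute for the citation.
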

\begin{proof}
It follows from Theorem \ref{monoc0EP} and the fact that if $X$ has the CSCP, then $B_{X^*}$ is $w^*$-monolithic \cite[Proposition~1.5]{KK}.
\end{proof}

\end{section}

\begin{section}{The \EP\ for $C(K)$ spaces}
\label{C(K)spaces}

In this section, we develop the theory of the \EP\ in the context of Banach spaces of the form $C(K)$. Throughout this section we will refer to a compact and Hausdorff space just as a compact space. For a compact space $K$, we identify the dual space $C(K)^*$ with the space $M(K)$ of regular signed finite Borel measures on $K$, endowed with the total variation norm. In Lemma \ref{MonoMBolaMono}, we establish conditions on a compact space $K$ that ensure that $B_{M(K)}$ is $w^*$-monolithic. Clearly if $B_{M(K)}$ is $w^*$-monolithic, then $K$ is monolithic, since $K$ is homeomorphic to a closed subspace of $\big(B_{M(K)}, w^*\big)$. The additional condition on monolithic compacta used in Lemma \ref{MonoMBolaMono} is Property (M). We say that a compact space $K$ has {\it Property (M)} if the support of every measure belonging to $M(K)$ is separable. Recall that if $\mu$ is an element of $M(K)$, then its {\it support} is defined as:
\[\supp \mu=\{p \in K: \vert \mu \vert(U)>0, \ \text{for every open neighborhood $U$ of $p$}\},\]
where $\vert \mu \vert$ denotes the total variation of $\mu$.

\begin{lem}\label{MonoMBolaMono}
If $K$ is monolithic compact space with Property (M), then $B_{M(K)}$ is $w^*$-monolithic.
\end{lem}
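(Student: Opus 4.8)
The plan is to verify condition (b) of Lemma \ref{seminormSep} for $X=C(K)$, namely to show that $(C(K),\rho_A)$ is separable for every countable subset $A\subseteq B_{M(K)}$. So I would fix such an $A$, enumerate it as $A=\{\mu_n:n\ge 1\}$, and build a single small closed set carrying all the mass of all the $\mu_n$, onto which the whole problem can be pushed.

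First I would set $S=\bigcup_{n\ge 1}\supp\mu_n$. By Property (M) each $\supp\mu_n$ is separable, so $S$ is a countable union of separable sets and hence separable; consequently its closure $L=\overline{S}$ is a separable closed, thus compact, subspace of $K$. Now monolithicity of $K$ forces the separable subspace $L$ to be second countable, and a second countable compact Hausdorff space is metrizable by Urysohn's theorem. The payoff of this step is that $C(L)$ is norm-separable.

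Next I would exploit that each $\mu_n$ is concentrated on $L$. The key point, and the step I expect to be the main obstacle, is the fact that $|\mu_n|(K\setminus\supp\mu_n)=0$: since every point of the open set $K\setminus\supp\mu_n$ has an open neighborhood of $|\mu_n|$-measure zero, each compact subset of it is a finite union of null sets and hence null, so inner regularity of the finite measure $|\mu_n|$ yields the claim. Granting this, $\mu_n(f)=\int_{\supp\mu_n} f\,d\mu_n$ depends only on $f|_L$, and restricting $\mu_n$ to $L$ produces $\nu_n\in M(L)$ with $\Vert\nu_n\Vert=\Vert\mu_n\Vert\le 1$ and $\mu_n(f)=\nu_n(f|_L)$ for all $f\in C(K)$. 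Writing $R:C(K)\to C(L)$ for the restriction operator, which is surjective by Tietze's extension theorem, and $\tilde\rho=\rho_{\{\nu_n:n\ge1\}}$ for the corresponding seminorm on $C(L)$, this says $\rho_A=\tilde\rho\circ R$; moreover $\tilde\rho(g)\le\Vert g\Vert_{C(L)}$ because $\Vert\nu_n\Vert\le 1$.

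Finally, to produce a countable $\rho_A$-dense set I would take a countable norm-dense $D\subseteq C(L)$, which exists since $C(L)$ is separable, use Tietze to pick for each $g\in D$ a preimage $f_g\in C(K)$ with $R(f_g)=g$, and check that $D'=\{f_g:g\in D\}$ is $\rho_A$-dense. Indeed, given $f\in C(K)$ and $\epsilon>0$, choosing $g\in D$ with $\Vert f|_L-g\Vert_{C(L)}<\epsilon$ gives $\rho_A(f-f_g)=\tilde\rho(f|_L-g)\le\Vert f|_L-g\Vert_{C(L)}<\epsilon$. This shows $(C(K),\rho_A)$ is separable, and by Lemma \ref{seminormSep} it follows that $B_{M(K)}$ is $w^*$-monolithic.
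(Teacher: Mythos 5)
Your proof is correct, and it shares the paper's key geometric step: forming $L=\overline{\bigcup_n\supp\mu_n}$ and using Property (M) plus monolithicity to conclude that $L$ is compact metrizable. Where you diverge is in how you convert this into $w^*$-monolithicity of $B_{M(K)}$. The paper argues directly on the dual side: it introduces the pushforward $i_*:M(L)\to M(K)$, notes that $i_*$ is injective and $w^*$-continuous so that $i_*[B_{M(L)}]$ is a $w^*$-compact homeomorphic copy of the metrizable ball $B_{M(L)}$, and observes that $\overline{\{\mu_n:n\ge1\}}^{w^*}$ sits inside this copy because each $\mu_n$ is concentrated on $L$; metrizability of the closure of an arbitrary countable set is exactly monolithicity. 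You instead work on the predual side through condition (b) of Lemma \ref{seminormSep}, factoring $\rho_A$ through the restriction operator $R:C(K)\to C(L)$ and lifting a countable dense subset of the separable space $C(L)$ via Tietze. Both routes hinge on the same measure-theoretic fact that $|\mu_n|(K\setminus\supp\mu_n)=0$ (the paper uses it implicitly when asserting $\supp\mu\subset L$ implies $\mu\in i_*[B_{M(L)}]$, and you prove it explicitly by inner regularity). Your version has the merit of reusing the seminorm machinery already built in Section \ref{sec:GeneralResults} and avoiding any verification about $i_*$; the paper's version is more self-contained topologically and makes the embedding of $\overline{\{\mu_n\}}^{w^*}$ into a metrizable compactum completely explicit. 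Either argument is a valid proof of the lemma.
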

\begin{proof}
Let $\{\mu_n: n \ge 1\}$ be a countable subset of $B_{M(K)}$ and denote by $L$ the closed subset of $K$ defined as $L=\overline{\bigcup_{n \ge 1} \supp \mu_n}$. Since $K$ has Property (M) and is monolithic, we have that $L$ is a compact metric space and therefore $B_{M(L)}$ is $w^*$-metrizable. To show that $\overline{\{\mu_n: n \ge 1\}}^{w^*}$ is $w^*$-metrizable, we will prove that this space is a subspace of an homeomorphic image of $(B_{M(L)}, w^*)$. Denote by $i_{*}: M(L) \to M(K)$ the operator defined as $i_*(\nu)(B)=\nu(B \cap L)$, for every Borel subset $B$ of $K$ and every $\nu \in M(L)$. Note that $i_*$ is injective and $w^*$-continuous. Therefore the $w^*$-compactness of $B_{M(L)}$ ensures that $\big(i_*[B_{M(L)}], w^*\big)$ is homeomorphic to $(B_{M(L)}, w^*)$.
Finally, observe that if $\mu \in B_{M(K)}$ and $\supp \mu \subset L$, then $\mu \in i_*[B_{M(L)}]$. This implies that $\overline{\{\mu_n: n \ge 1\}}^{w^*} \subset i_*[B_{M(L)}]$.
\end{proof}

\begin{teo}\label{monoMEP}
If $K$ is a monolithic compact space with Property (M), then $C(K)$ has the $2$-\EP.
\end{teo}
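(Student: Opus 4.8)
The plan is to recognize that this statement is an immediate synthesis of the two principal results already at our disposal, namely Lemma \ref{MonoMBolaMono} and Theorem \ref{monoc0EP}. The only conceptual bridge needed is the standard identification of the dual of $C(K)$ with the measure space $M(K)$, which has already been fixed at the start of this section; under this identification the closed dual unit ball $B_{C(K)^*}$ is precisely $B_{M(K)}$. So the entire argument reduces to verifying that the hypotheses of Theorem \ref{monoc0EP} hold for $X = C(K)$.

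First I would invoke Lemma \ref{MonoMBolaMono}: since $K$ is assumed to be a monolithic compact space with Property (M), that lemma yields directly that $B_{M(K)}$ is $w^*$-monolithic. Transporting this through the isometric identification $C(K)^* = M(K)$, I conclude that $B_{C(K)^*}$ is $w^*$-monolithic. At this point the space $X = C(K)$ satisfies exactly the single hypothesis required by Theorem \ref{monoc0EP}.

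Second, I would apply Theorem \ref{monoc0EP} to $X = C(K)$. That theorem asserts that any Banach space with $w^*$-monolithic closed dual unit ball has the $2$-\EP, so we obtain at once that $C(K)$ has the $2$-\EP, which is the desired conclusion. No further estimates or extension constructions are needed here, because the quantitative content (the constant $2$, arising from Sobczyk's theorem inside the proof of Theorem \ref{monoc0EP}) is entirely absorbed by that theorem.

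I do not expect any genuine obstacle in this particular step: all the real work has been front-loaded into Lemma \ref{MonoMBolaMono}, where monolithicity and Property (M) are combined to control the supports of countable families of measures and thereby force $w^*$-metrizability of the relevant closures, and into the operator-theoretic characterization underlying Theorem \ref{monoc0EP}. The only point to state carefully is the identification $C(K)^* = M(K)$ with the total variation norm, so that ``$B_{X^*}$ is $w^*$-monolithic'' and ``$B_{M(K)}$ is $w^*$-monolithic'' are literally the same assertion; once that is made explicit the theorem follows as a one-line corollary.
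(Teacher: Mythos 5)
Your proposal is correct and follows exactly the paper's own argument, which derives the theorem directly from Lemma \ref{MonoMBolaMono} combined with Theorem \ref{monoc0EP} via the identification $C(K)^*=M(K)$. No issues.
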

\begin{proof}
It follows from Theorem \ref{monoc0EP} and Lemma \ref{MonoMBolaMono}.
\end{proof}

Clearly every metrizable compact space has Property (M) and it is easy to see that every scattered compact space has Property (M). We say that a topological space $X$ is {\it scattered} if for every closed subspace $Y$ of $X$, the set of isolated points of $Y$ is dense in $Y$. Other examples of spaces with Property (M) are compact lines \cite[Lemma~2.1]{KK}, Eberlein compacta and Rosenthal compacta \cite[Remark~3.2]{ArgyrosCorson}. Recall that a {\it Rosenthal compactum} is a pointwise compact subset of the space of Baire class one real-valued functions on a Polish space.

\begin{cor}\label{ghat}
If $K$ is a monolithic compact space belonging to any of the following classes, then $C(K)$ has the $2$-\EP:
\begin{enumerate}
\item[(a)] Compact lines.

\item[(b)] Scattered spaces.

\item[(c)] Rosenthal compacta.
\end{enumerate}
\end{cor}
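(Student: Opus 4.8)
The plan is simply to invoke Theorem \ref{monoMEP}, which asserts that a monolithic compact space with Property (M) yields a $C(K)$ with the $2$-\EP. Since $K$ is assumed monolithic in all three items, the entire task reduces to verifying that every compact space in each of the listed classes has Property (M). These three Property (M) statements have already been recorded in the discussion preceding the corollary, so the proof amounts to assembling them and applying the theorem.

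Concretely, for item (a) the assertion that compact lines have Property (M) is \cite[Lemma~2.1]{KK}, and for item (c) the corresponding statement for Rosenthal compacta is \cite[Remark~3.2]{ArgyrosCorson}; in both cases nothing further needs to be proved here. The only item that calls for an argument is (b). Here I would use the classical fact that every regular Borel measure $\mu$ on a scattered compact space is purely atomic. Granting this, the set of atoms of $\mu$ is at most countable, since $\mu$ is finite and distinct atoms carry disjoint positive mass; consequently $\supp\mu$ is the closure of a countable set and is therefore separable. As this holds for every $\mu \in M(K)$, a scattered compact space $K$ has Property (M).

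With Property (M) in hand for each of the three classes, every monolithic compact space $K$ belonging to any of them satisfies the hypotheses of Theorem \ref{monoMEP}, and we conclude that $C(K)$ has the $2$-\EP. I expect no genuine obstacle: all of the substance sits in Theorem \ref{monoMEP} (hence ultimately in Lemma \ref{MonoMBolaMono} and Theorem \ref{monoc0EP}), and the only point requiring care is the atomicity of measures on scattered compacta used for item (b), which is a standard ingredient.
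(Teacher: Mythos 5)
Your proposal is correct and follows exactly the paper's route: the paper proves this corollary by citing Theorem \ref{monoMEP} together with the fact (recorded in the preceding discussion, with the same references for compact lines and Rosenthal compacta) that each of the three classes has Property (M). Your added argument for the scattered case via pure atomicity of Borel measures is a valid filling-in of the detail the paper dismisses as ``easy to see.''
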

\begin{proof}
It follows from Theorem \ref{monoMEP} and the fact that those spaces have Property (M).
\end{proof}

It is well-known that the existence of a Corson compactum without Property (M) is independent from the axioms of $ZFC$. It was shown in \cite[Remark~3.2.3]{ArgyrosCorson} that if we assume Martin's axiom and the negation of the Continuum hypothesis ($MA+\neg CH$), then every Corson compactum has Property (M). Therefore Theorem \ref{monoMEP} implies that, under $MA+\neg CH$, $C(K)$ has the $2$-\EP, for every Corson compactum $K$. On the other hand, if we assume $CH$, then there exists a Corson compactum without Property (M) \cite[Theorem~3.12]{ArgyrosCorson}. It turns out that the space of continuous functions on the Corson compactum constructed in \cite[Theorem~3.12]{ArgyrosCorson} does not have the \EP.

\begin{prop}\label{CorsonSemEP}
Assume $CH$. There exists a Corson compactum $K$ such that $C(K)$ does not have the \EP.
\end{prop}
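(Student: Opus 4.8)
The plan is to produce inside $C(K)$ an isomorphic copy of $\ell_1(I)$ for some uncountable set $I$ and then invoke Corollary \ref{copial1SemEP}. Assuming $CH$, I would start from the Corson compactum $K$ of \cite[Theorem~3.12]{ArgyrosCorson}, which fails Property (M), and fix a Radon measure $\mu \in M(K)$, normalized to a probability measure, whose support $S = \supp \mu$ is non-separable. Since $K$ is Corson, so is the closed subspace $S$, and being non-separable it is non-metrizable. Replacing $K$ by $S$ is harmless: any family in $C(S)$ equivalent to the $\ell_1$-basis lifts, via Tietze extension, to such a family in $C(K)$, because restriction to $S$ can only decrease the supremum norm and so the lower $\ell_1$ estimate is preserved. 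I may therefore assume $\mu$ has full support and realize $K$ as a subset of $\Sigma(\Gamma)$ for an uncountable set $\Gamma$, with coordinate projections $\pi_\gamma \in C(K)$ taking values in $[0,1]$.

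The first genuine step is to locate uncountably many coordinates that the measure "sees": I would show that there are $\epsilon_0 > 0$ and an uncountable $\Gamma_0 \subseteq \Gamma$ with $\mu(\{\pi_\gamma \ge \epsilon_0\}) \ge \epsilon_0$ for every $\gamma \in \Gamma_0$. This is a counting argument. If every set $\Gamma_\epsilon = \{\gamma : \mu(\{\pi_\gamma \ge \epsilon\}) \ge \epsilon\}$ were countable, then $\Gamma_* = \bigcup_n \Gamma_{1/n}$ would be countable, and every point of $S$ would have all coordinates outside $\Gamma_*$ equal to $0$: a point $x$ with $\pi_{\gamma_0}(x) > 0$ and $\gamma_0 \notin \Gamma_*$ has a neighborhood of positive measure on which $\pi_{\gamma_0}$ is bounded away from $0$, which forces $\gamma_0 \in \Gamma_*$. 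Hence $S$ would embed into $\R^{\Gamma_*}$ with $\Gamma_*$ countable, making $S$ metrizable, a contradiction.

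The decisive step is to upgrade $\Gamma_0$ to an uncountable $\Gamma_1 \subseteq \Gamma_0$ for which the family $\{A_\gamma : \gamma \in \Gamma_1\}$, with $A_\gamma = \{\pi_\gamma \ge \epsilon_0\}$, is independent in the measure algebra; exploiting that the example of \cite{ArgyrosCorson} may be taken zero-dimensional, I would aim instead for an uncountable independent family of clopen sets $U_\gamma$ with $\chi_{U_\gamma} \in C(K)$. Granting this, the functions $g_\gamma = \chi_{U_\gamma}$ span an isomorphic copy of $\ell_1(\Gamma_1)$: the upper estimate $\|\sum a_\gamma g_\gamma\|_\infty \le \sum |a_\gamma|$ is immediate, while independence provides, for the sign pattern $\theta(\gamma) = \operatorname{sign}(a_\gamma)$, points where $g_\gamma$ equals $1$ exactly on the positive (respectively negative) indices, yielding $\|\sum a_\gamma g_\gamma\|_\infty \ge \tfrac12 \sum |a_\gamma|$. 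Corollary \ref{copial1SemEP} then shows that $C(K)$ does not have the \EP.

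I expect this last step to be the main obstacle, and the point where the specific construction of \cite[Theorem~3.12]{ArgyrosCorson} is indispensable. A soft argument cannot suffice: an uncountable family of positive-measure sets need not contain an uncountable independent subfamily, since the existence of such a subfamily forces the measure to have uncountable Maharam type. The role of $CH$ is precisely to furnish a Corson compactum whose full-support measure has uncountable Maharam type, equivalently supports an uncountable independent family; extracting that family — rather than merely the "large coordinates" $\Gamma_0$ — is the combinatorial heart of the argument, and I would either read it off directly from the construction or establish it by a tree or elementary-submodel argument under $CH$.
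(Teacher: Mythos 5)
Your overall strategy is the same as the paper's: exhibit an isomorphic copy of $\ell_1(\omega_1)$ inside $C(K)$ for the Corson compactum $K$ of \cite[Theorem~3.12]{ArgyrosCorson} and then apply Corollary~\ref{copial1SemEP}. The paper, however, obtains the embedding of $\ell_1(\omega_1)$ in one stroke by citing \cite[Theorem~3.13]{ArgyrosCorson}, which states that for a Corson compactum defined by an adequate family of sets, failure of Property (M) is equivalent to $\ell_1(\omega_1)$ embedding into $C(K)$. You instead try to manufacture the embedding by hand, and the decisive step --- upgrading the uncountable family of ``coordinates of positive measure'' to an uncountable \emph{independent} family of clopen sets --- is exactly the step you leave open. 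That is a genuine gap, not a routine verification.

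Moreover, the heuristic you offer for closing it does not work as stated. Uncountable Maharam type of $\mu$ yields an uncountable stochastically independent family \emph{in the measure algebra} of $\mu$, which produces a copy of $\ell_1(\omega_1)$ in $L_\infty(\mu)$ (or $L_1(\mu)$), but not in $C(K)$: to land in $C(K)$ you need the independent family to be realized by actual clopen (or at least continuously separated) subsets of $K$, and passing from measure-algebra independence to genuine Boolean independence of sets in $K$ is precisely the content of \cite[Theorem~3.13]{ArgyrosCorson}, which exploits the adequate-family structure. That this cannot be done ``softly'' for an arbitrary Corson compactum without Property (M) is underscored by the paper's own remark following Theorem~\ref{CorsonIndependent}: for Kunen's Corson compactum without Property (M) it is unknown whether $\ell_1(\omega_1)$ embeds in $C(K)$, i.e.\ whether the \EP\ fails. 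Note also that the Maharam-type route via \cite{PlebanekMaharam} requires type strictly greater than $\omega_1$, which is unavailable here under $CH$ since the compactum has weight $\omega_1$. Your preliminary reductions (restricting to the support, the counting argument producing $\Gamma_0$) are correct but ultimately unnecessary; the proof really does reduce to quoting the Argyros--Mercourakis--Negrepontis theorem, and without it your argument is incomplete.
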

\begin{proof}
Let $K$ denote the Corson compactum without Property (M) constructed in \cite[Theorem~3.12]{ArgyrosCorson}. It follows from \cite[Theorem~3.13]{ArgyrosCorson} that $C(K)$ contains an isomorphic copy of $\ell_1(\omega_1)$. Therefore the result follows from Corollary \ref{copial1SemEP}.
\end{proof}

From what was discussed above we conclude the following result.

\begin{teo}\label{CorsonIndependent}
The existence of a Corson compactum $K$ such that $C(K)$ does not have the \EP\ is independent from the axioms of ZFC.
\end{teo}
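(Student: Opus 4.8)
The plan is to establish independence in the standard relative-consistency fashion: I would exhibit one model of $ZFC$ in which the asserted Corson compactum exists and a second model of $ZFC$ in which it does not. Since both models are consistent relative to $ZFC$, neither the existence statement nor its negation is a theorem of $ZFC$, which is precisely what independence means.

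First I would treat the positive direction. Proposition \ref{CorsonSemEP} already shows that under $CH$ there is a Corson compactum $K$ for which $C(K)$ fails the \EP. Because $ZFC+CH$ is consistent relative to $ZFC$ (Gödel's constructible universe), the desired Corson compactum exists in every model of $ZFC+CH$. Hence the existence statement holds in some model of $ZFC$ and therefore cannot be refuted from the axioms of $ZFC$.

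Next I would treat the negative direction, using the discussion preceding Proposition \ref{CorsonSemEP}. By \cite[Remark~3.2.3]{ArgyrosCorson}, under $MA+\neg CH$ every Corson compactum has Property (M); combining this with the monolithicity of Corson compacta and Theorem \ref{monoMEP}, under $MA+\neg CH$ every Corson compactum $K$ yields a space $C(K)$ with the $2$-\EP, and in particular with the \EP. Thus in any model of $MA+\neg CH$ there is no Corson compactum $K$ with $C(K)$ lacking the \EP, so the negation of the existence statement holds there. Since $ZFC+MA+\neg CH$ is consistent relative to $ZFC$ (Solovay--Tennenbaum), the negation of the existence statement holds in some model of $ZFC$, and therefore the existence statement cannot be proved from the axioms of $ZFC$.

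Putting the two halves together yields that the statement is neither provable nor refutable in $ZFC$, i.e.\ it is independent. I do not anticipate any genuine obstacle: the substantive mathematics is already packaged in Proposition \ref{CorsonSemEP} and Theorem \ref{monoMEP}, and the only remaining inputs are the classical relative consistency of $CH$ and of $MA+\neg CH$. The sole point that needs to be stated carefully is that, as with all such results, the independence is asserted relative to the consistency of $ZFC$ itself.
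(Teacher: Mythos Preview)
Your proposal is correct and follows exactly the paper's approach: the paper simply notes that the theorem follows from the discussion above, namely Proposition~\ref{CorsonSemEP} for the $CH$ side and the combination of \cite[Remark~3.2.3]{ArgyrosCorson} with Theorem~\ref{monoMEP} for the $MA+\neg CH$ side. Your added remarks about the relative consistency of $CH$ and of $MA+\neg CH$ just make explicit what the paper leaves implicit.
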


\begin{rem}
The Corson compactum without Property (M) constructed in \cite[Theorem~3.12]{ArgyrosCorson} is defined by an adequate family of sets. In \cite[Theorem~3.13]{ArgyrosCorson}, it was shown that if $K$ is a Corson compactum defined by an adequate family of sets, then $K$ has Property (M) if and only if $\ell_1(\omega_1)$ does not embed isomorphically in $C(K)$. Thus Corollary \ref{copial1SemEP} and Theorem \ref{monoMEP} imply that if $K$ is a Corson compactum defined by an adequate family of sets, then $C(K)$ has the \EP\ if and only if $K$ has Property (M). We do not know if this equivalence holds for any Corson compactum. For instance it is unclear if $C(K)$ has the \EP\ for the Corson compactum $K$ without Property (M) constructed in \cite{Kunen}.
\end{rem}

Note that Proposition \ref{CorsonSemEP} also shows that if we assume $CH$, then there exists a monolithic compact space $K$ such that $C(K)$ does not have the \EP. However this is not possible under $MA+\neg CH$.

\begin{prop}\label{Martinmonotemc0EPsempre}
Assume $MA+\neg CH$. If $K$ is a monolithic compact space, then $C(K)$ has the $2$-\EP.
\end{prop}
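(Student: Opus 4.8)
The plan is to reduce the statement to Theorem~\ref{monoMEP} by proving that, under $MA+\neg CH$, every monolithic compact space $K$ has Property~(M); once this is in hand, $C(K)$ has the $2$-\EP\ directly from Theorem~\ref{monoMEP}. So I would fix $\mu \in M(K)$ and set $S=\supp\mu$. First, $S$ inherits monolithicity from $K$: since $S$ is closed in $K$, the closure in $S$ of a countable set $A\subseteq S$ equals its closure in $K$, which is metrizable by monolithicity of $K$. Second, normalizing $\vert\mu\vert$ to a probability measure and restricting it to $S$ yields a measure that is strictly positive on $S$, because by definition of the support every nonempty relatively open subset of $S$ has positive measure. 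A strictly positive finite measure admits only countably many pairwise disjoint sets of positive measure, so $S$ is ccc. Thus the task reduces to showing that a monolithic ccc compact space is separable (equivalently, since $S$ is monolithic, second countable), whence $\supp\mu=S$ is separable and $K$ has Property~(M).

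The hypothesis $MA+\neg CH$ would enter through the classical fact that $MA_{\aleph_1}$ implies that every ccc topological space has caliber $\omega_1$; in particular $S$ has caliber $\omega_1$. I would then isolate the following purely topological statement as the heart of the matter: \emph{a monolithic compact space with caliber $\omega_1$ is metrizable}. This cannot hold for ccc alone, since under $CH$ the support of the measure witnessing Proposition~\ref{CorsonSemEP} is a monolithic, ccc, non-metrizable compactum; the extra strength of caliber $\omega_1$ is exactly what $MA+\neg CH$ supplies beyond ccc.

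To prove this I would argue by contradiction. If $S$ were not metrizable then, as $S$ is monolithic, it would not be separable (for monolithic compacta separability, second countability and metrizability coincide, and the density equals the weight). Hence one can build by transfinite recursion a left-separated sequence $\langle x_\alpha : \alpha<\omega_1\rangle$, i.e.\ $x_\alpha \notin M_\alpha := \overline{\{x_\beta : \beta<\alpha\}}$, using at each countable stage that $M_\alpha$ is a proper, and by monolithicity metrizable, closed subset. For each $\alpha$ I would choose, by normality, an open set $U_\alpha \ni x_\alpha$ with $\overline{U_\alpha}\cap M_\alpha=\emptyset$. Applying caliber $\omega_1$ to the uncountable family $\{U_\alpha : \alpha<\omega_1\}$ produces an uncountable $A\subseteq\omega_1$ and a point $p\in\bigcap_{\alpha\in A}U_\alpha$. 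Here the sets $M_\alpha$ are increasing and metrizable, $p\notin M_\alpha$ for every $\alpha\in A$, whereas $x_\beta\in M_\alpha$ whenever $\beta<\alpha$; I would exploit this clash between the metrizability of the countable-level closures and the existence of a single limit point $p$ shared by uncountably many of the $U_\alpha$ to derive a contradiction.

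The main obstacle is precisely this final extraction step: turning the caliber-$\omega_1$ witness $p$, together with the left-separated sequence, into a genuine contradiction with monolithicity. The subtlety is that $p$ may be a limit of the whole uncountable sequence rather than of any countable subsequence, which is exactly the phenomenon that makes $[0,\omega_1]$ monolithic yet not ccc, so the argument must use caliber $\omega_1$ in an essential way rather than mere ccc. I expect to resolve this either by a careful elementary-submodel argument that confines the relevant accumulation to a countable, hence metrizable, closed set, or by appealing to the structure theory of monolithic compacta of caliber $\omega_1$; the remaining steps are routine.
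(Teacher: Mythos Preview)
Your overall strategy---show that under $MA+\neg CH$ every monolithic compactum has Property~(M), then invoke Theorem~\ref{monoMEP}---is exactly the route the paper takes, and your reduction of Property~(M) to ``monolithic $+$ ccc $\Rightarrow$ separable'' via the support of $\vert\mu\vert$ is carried out correctly.

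The difference is in how that last implication is handled. The paper does not prove it: it simply quotes the theorem of Arhangel'skii and Shapirovskii that, under $MA+\neg CH$, every monolithic compact ccc space is separable, and observes that this is precisely what is needed. You instead try to reprove this result from scratch via caliber $\omega_1$, and the gap you flag at the end is real. From the left-separated sequence and the common point $p\in\bigcap_{\alpha\in A}U_\alpha$ you only obtain $p\notin M_\alpha$ for every $\alpha<\omega_1$; nothing you have arranged forces $p$ into the closure of any countable piece of the sequence, so there is no immediate clash with monolithicity. The elementary-submodel or structure-theory argument you allude to is essentially the content of the Arhangel'skii--Shapirovskii theorem itself, and it is not a routine step. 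In short, what remains in your outline is exactly the nontrivial theorem the paper cites; the cleanest fix is to cite it as well rather than attempt an ad hoc proof.
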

\begin{proof}
It was shown in \cite[Theorem~3]{Arkangelski} that if we assume $MA+\neg CH$, then every monolithic compact space with the countable chain condition is separable. It is easy to see that this implies that, under $MA+\neg CH$, every monolithic compact space has Property (M). Thus the result follows from Theorem \ref{monoMEP}.
\end{proof}

From what was discussed above we conclude the following result.

\begin{teo}
The existence of a monolithic compact space $K$ such that $C(K)$ does not have the \EP\ is independent from the axioms of $ZFC$.
\end{teo}

Now we present some useful stability results for the \EP\ in the context of $C(K)$ spaces.

\begin{prop}\label{FecCompactEP}
Let $K$ and $L$ be compact spaces and assume that $C(K)$ has the \EP.
\begin{enumerate}
\item If $L$ is a subspace of $K$, then $C(L)$ has the \EP.

\item If $L$ is a quotient of $K$, then $C(L)$ has the \EP.
\end{enumerate}
\end{prop}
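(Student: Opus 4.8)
The plan is to reduce both parts of Proposition~\ref{FecCompactEP} to the general stability results for the \EP\ already established in Proposition~\ref{fechamentosEP}, by realizing the passage from $K$ to a subspace or quotient at the level of the Banach spaces $C(K)$, $C(L)$ via a suitable operator. The key observation is that topological maps between compact spaces induce, by composition, bounded linear maps between the corresponding spaces of continuous functions, and the combinatorial relationship (subspace/quotient) is exactly what determines whether the induced operator is onto or its image is well-placed.

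For part~(2), suppose $q:K\to L$ is a continuous surjection witnessing that $L$ is a quotient of $K$. The natural composition map $Q:C(L)\to C(K)$ given by $Q(f)=f\circ q$ is a well-defined bounded linear operator, and in fact a linear isometry onto its image, since $q$ is surjective (so $\sup_{x\in K}|f(q(x))|=\sup_{y\in L}|f(y)|$). Thus $C(L)$ is linearly isometric to the closed subspace $Q[C(L)]$ of $C(K)$. First I would invoke Proposition~\ref{fechamentosEP}(a): since $C(K)$ has the \EP\ and $Q[C(L)]$ is a subspace of $C(K)$, the subspace $Q[C(L)]$ has the \EP; and since the \EP\ is plainly preserved by linear isometric isomorphisms, $C(L)$ has the \EP.

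For part~(1), suppose $L$ is a closed subspace of $K$. Here the relevant operator is the restriction map $R:C(K)\to C(L)$ defined by $R(f)=f|_{L}$, which is bounded with $\Vert R\Vert\le 1$. The crucial point is that $R$ is \emph{onto}: by the Tietze extension theorem, every continuous function on the closed subspace $L$ extends to a continuous function on $K$, so $R$ is surjective. First I would verify this surjectivity, then apply Proposition~\ref{fechamentosEP}(b) to the bounded surjection $R:C(K)\to C(L)$ to conclude that $C(L)$ has the \EP, using the hypothesis that $C(K)$ has the \EP.

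I expect the only genuine content to lie in correctly identifying which of the two abstract stability statements applies in each case and in checking the two topological facts that make them applicable: the isometric embedding coming from surjectivity of $q$ in part~(2), and Tietze extension giving surjectivity of the restriction map in part~(1). Neither step presents a real obstacle, but the main thing to get right is the duality between the topological direction of the map (a surjection $K\to L$ versus an inclusion $L\subset K$) and the resulting direction and type (subspace versus quotient) of the induced operator on the function spaces, so that the appropriate clause of Proposition~\ref{fechamentosEP} is invoked.
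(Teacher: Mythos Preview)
Your proposal is correct and matches the paper's own proof essentially line for line: part~(1) via the restriction operator $C(K)\to C(L)$ (onto by Tietze) together with Proposition~\ref{fechamentosEP}(b), and part~(2) via the isometric embedding $f\mapsto f\circ q$ of $C(L)$ into $C(K)$ together with Proposition~\ref{fechamentosEP}(a). The only difference is cosmetic---you spell out Tietze and the isometry computation explicitly, whereas the paper leaves these as ``easy to see''.
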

\begin{proof}
To prove (1), note that the restriction operator $C(K) \to C(L)$ is a bounded and onto operator. Therefore the result follows from Proposition \ref{fechamentosEP}(b).
Now let us prove (2). It is easy to see that if $\phi: K \to L$ is a continuous and onto map, then the composition operator $\phi^*: C(L) \to C(K)$ defined as $\phi^*(f)=f \circ \phi$, for all $f \in C(L)$, is an isometric embedding. Proposition \ref{fechamentosEP}(a) ensures that $\phi^*[C(L)]$ has the \EP\ and therefore $C(L)$ has the \EP.
\end{proof}

In what follows, we investigate the \EP\ in the class of scattered compact spaces. Let us recall some basic definitions and facts. Given a topological space $X$ and an ordinal number $\alpha$, we denote by $X^{(\alpha)}$ the $\alpha^{th}$ Cantor--Bendixson derivative of $X$.
It is well-known that a topological space $X$ is scattered if and only if there exists an ordinal $\alpha$ such that $X^{(\alpha)}=\emptyset$ \cite[Proposition~17.8]{Koppelberg}. The {\it height} of a scattered topological space $X$, denoted here by $\height(X)$, is defined as the least ordinal $\alpha$ such that $X^{(\alpha)}=\emptyset$. Finally if the height of $X$ is a natural number, then we say that $X$ has {\it finite height}.
In Corollary \ref{ghat}(b), we proved that if $K$ is a monolithic compact scattered space, then $C(K)$ has the \EP. In Theorem \ref{finiteheightEPMONO} and Corollary \ref{htw1}, we will show that if $K$ is a scattered compact space with height at most $\omega+1$ such that $C(K)$ has the \EP, then $K$ is monolithic.

\begin{lem}\label{MrowkaSemEP}
Let $K$ be a scattered compact space with height $3$. If $K$ is separable and nonmetrizable, then $C(K)$ does not have the \EP.
\end{lem}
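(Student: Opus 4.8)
The plan is to build by hand a $c_0$-valued operator on a closed subspace of $C(K)$ that admits no $c_0$-valued extension, exploiting the almost disjoint (Mrówka-type) structure that the hypotheses impose. I note at the outset that the route through Corollary~\ref{copial1SemEP} is closed off here: being scattered, $K$ has Property~(M), and $C(K)$ carries no copy of $\ell_1(\omega_1)$, so the failure of the \EP\ must be exhibited directly.

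First I would read off the anatomy of $K$. The set $I$ of isolated points of $K$ is dense by scatteredness, and since each isolated point lies in every dense set and $K$ is separable, $I$ is countable; as a countable compact space is metrizable, nonmetrizability forces $K$, and hence $I$, to be countably infinite. From $\height(K)=3$ the derivative $K^{(2)}$ is compact and discrete, thus finite, and therefore $\Gamma:=K^{(1)}\setminus K^{(2)}$ is uncountable. Each $\gamma\in\Gamma$ is isolated in $K^{(1)}$, and since scattered compacta are zero-dimensional I can fix a clopen set $V_\gamma\ni\gamma$ with $V_\gamma\cap K^{(1)}=\{\gamma\}$; then $S_\gamma:=V_\gamma\cap I$ is infinite and $\mathbf 1_{V_\gamma}\in C(K)$. (The family $\{S_\gamma\}_{\gamma\in\Gamma}$ is in fact almost disjoint, though the argument below will not need this.)

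Next I would set up the operator. Enumerating $I=\{i_n:n\ge1\}$, the subspace $Y:=\{f\in C(K):f|_{K^{(1)}}=0\}$ is closed and, by a limit-point argument (every infinite subset of $I$ accumulates inside $K^{(1)}$), coincides isometrically with $c_0(I)$. I take $T:Y\to c_0$, $T(f)=(f(i_n))_{n\ge1}$, an isometric isomorphism, and assume for contradiction that it extends to a bounded $\tilde T:C(K)\to c_0$. Writing $\mu_n:=\pi_n\circ\tilde T\in M(K)$, the extension amounts to a sequence with $\sup_n\|\mu_n\|\le\|\tilde T\|$, with $\mu_n(g)\to0$ for every $g\in C(K)$, and with $\mu_n|_Y=\delta_{i_n}|_Y$. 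Testing the last identity on the functions $\mathbf 1_{\{i_m\}}\in Y$ gives $\mu_n(\{i_m\})=\delta_{nm}$, so the restriction of $\mu_n$ to $I$ equals $\delta_{i_n}$; hence $\nu_n:=\mu_n-\delta_{i_n}$ is supported on $K^{(1)}$ and $C:=\sup_n\|\nu_n\|<\infty$.

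The obstruction comes from testing against the clopen bumps. Since $\nu_n$ lives on $K^{(1)}$ and $V_\gamma\cap K^{(1)}=\{\gamma\}$, $c_0$-convergence of $\tilde T$ forces
\[
\mu_n(\mathbf 1_{V_\gamma})=\mathbf 1_{S_\gamma}(i_n)+\nu_n(\{\gamma\})\longrightarrow 0,
\]
so that $\nu_n(\{\gamma\})\to-1$ along the infinite set $\{n:i_n\in S_\gamma\}$; in particular, for each $\gamma$ there are infinitely many $n$ with $|\nu_n(\{\gamma\})|\ge 1/2$. The hard part, and the heart of the proof, is to turn this into a contradiction by playing boundedness against uncountability: for each fixed $n$ the estimate $\sum_{\gamma\in\Gamma}|\nu_n(\{\gamma\})|\le\|\nu_n\|\le C$ shows that at most $2C$ indices $\gamma$ can have $|\nu_n(\{\gamma\})|\ge 1/2$. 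Counting the pairs $(\gamma,n)$ with $i_n\in S_\gamma$ and $|\nu_n(\{\gamma\})|\ge 1/2$, the column count (over the countably many $n$) yields at most $\aleph_0\cdot 2C=\aleph_0$ such pairs, whereas the row count produces at least one—indeed infinitely many—for each of the uncountably many $\gamma$, which is impossible. Therefore $T$ has no $c_0$-valued extension and $C(K)$ fails the \EP.
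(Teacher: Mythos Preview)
Your proof is correct and complete; the counting argument at the end is clean, and the identification of $\mu_n|_I$ with $\delta_{i_n}$ via testing on the characteristic functions $\mathbf 1_{\{i_m\}}\in Y$ is justified because $I$ is a countable discrete open subset of $K$.

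Your route, however, is genuinely different from the paper's. You construct an explicit witness to the failure of the \EP: you take $Y=\Ker(R)$ (where $R$ is restriction to $K^{(1)}$), identify it with $c_0$, and show by a direct measure-theoretic counting argument that the identity $Y\to c_0$ cannot extend. The paper instead argues abstractly: after reducing to the case $|K^{(2)}|=1$, it observes that the \EP\ would force the copy $Y\cong c_0$ to be complemented in $C(K)$, so the short exact sequence $0\to c_0\to C(K)\to c_0(K^{(1)})\to 0$ would split, giving $C(K)\cong c_0\oplus c_0(K^{(1)})$; this space is WCG, so $K$ would be an Eberlein compactum, hence monolithic, hence metrizable---contradiction. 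Your approach is more hands-on and entirely elementary (it uses nothing beyond basic facts about Radon measures and scattered spaces), while the paper's approach is shorter and more conceptual but leans on the nontrivial equivalence between ``$C(K)$ is WCG'' and ``$K$ is Eberlein'' together with the monolithicity of Eberlein compacta. Both arguments attack the same subspace $Y$; yours simply unpacks by hand what complementation would force on the extending functionals.
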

\begin{proof}
Firstly let us prove that we can assume that $K^{(2)}$ has only one point. Write $K^{(2)}=\{p_1,\ldots,p_k\}$, where $k=\vert K^{(2)}\vert$. Clearly, there exist disjoint clopen subsets $C_1,\ldots,C_k$ of $K$ such that $K=\bigcup_{i=1}^{k} C_i$ and $p_i \in C_i$, for every $i=1,\ldots,k$. It is easy to see that $C_i$ is separable and $C_i^{(2)}=\{p_i\}$, for every $i=1,\ldots,k$ and that there exists an index $i_0$ such that $C_{i_0}$ is nonmetrizable.
Thus Proposition \ref{FecCompactEP}(1) ensures that the result follows if we show that $C(C_{i_0})$ does not have the \EP. Note that the restriction operator $R: C(K) \to C(K^{(1)})$ provides the following short exact sequence:
\[0 \longrightarrow \Ker(R) \longrightarrow  C(K) \overset{R}\longrightarrow C(K^{(1)}) \longrightarrow 0.\]
It is well-known that $\Ker(R)$ is isomorphic to $c_0$ and that $C(K^{(1)})$ is isomorphic to $c_0(K^{(1)})$. Therefore we have an exact sequence of the form:
\begin{equation}\label{exactSequence}
0 \longrightarrow c_0 \longrightarrow  C(K) \longrightarrow c_0(K^{(1)}) \longrightarrow 0.
\end{equation}
In order to conclude that $C(K)$ does not have the \EP, we will show that the isomorphic copy of $c_0$ inside $C(K)$ given by \eqref{exactSequence} is not complemented in $C(K)$.
Assume by contradiction that this copy of $c_0$ is complemented in $C(K)$. In this case $C(K)$ is isomorphic to $c_0 \bigoplus c_0(K^{(1)})$ and therefore $C(K)$ is a WCG space. This implies that $K$ is an Eberlein compactum. But this is not possible, since $K$ is not monolithic.
\end{proof}

An interesting consequence of Lemma \ref{MrowkaSemEP} is that the \EP\ is not a three-space property. We say that a property $\mathcal P$ is a {\it three-space property} if every twisted sum of Banach spaces with property $\mathcal P$ also has property $\mathcal P$. Classical examples of three-space properties for Banach spaces are separability and reflexivity \cite{CastilloGonzalez}.

\begin{cor}\label{Not3Propc0EP}
The \EP\ is not a three-space property.
\end{cor}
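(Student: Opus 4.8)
The plan is to exhibit a single twisted sum $0\to A\to B\to C\to 0$ in which both endpoints $A$ and $C$ have the \EP\ while the middle space $B$ does not; the short exact sequence manufactured inside the proof of Lemma \ref{MrowkaSemEP} is tailor-made for this purpose. First I would secure a space to which that lemma applies, that is, a scattered compact space of height $3$ that is separable and nonmetrizable. The natural candidate is the one-point compactification $K$ of a Mr\'owka $\Psi$-space built over an uncountable maximal almost disjoint family $\mathcal A$ on $\omega$ (such a family exists in ZFC): the isolated points are the integers, the first Cantor--Bendixson derivative $K^{(1)}$ consists of the members of $\mathcal A$ together with the point at infinity, and the second derivative reduces to that single point, so $\height(K)=3$; density of $\omega$ yields separability, while the uncountability of $\mathcal A$ yields nonmetrizability.

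For this $K$, Lemma \ref{MrowkaSemEP} guarantees that $C(K)$ fails the \EP, and its proof provides the short exact sequence \eqref{exactSequence}
\[0\longrightarrow c_0\longrightarrow C(K)\longrightarrow c_0(K^{(1)})\longrightarrow 0,\]
which I would interpret as a twisted sum of $c_0$ and $c_0(K^{(1)})$. The subspace $c_0$ is separable, hence has the \EP\ by Sobczyk's theorem. The quotient $c_0(K^{(1)})$ is of the form $c_0(I)$ for the uncountable index set $I=K^{(1)}$, so it is WCG and therefore has the \EP\ by \cite[Proposition~2.2]{c0EP}. Since the middle term $C(K)$ does not have the \EP, this twisted sum of two spaces with the \EP\ fails to have the \EP, which is exactly the assertion.

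The only step carrying real content is the construction of a height-$3$ separable nonmetrizable scattered compactum, and this is precisely what the Mr\'owka space supplies in ZFC; once it is in hand, the corollary follows by merely reading off the two endpoints of \eqref{exactSequence} and invoking that separable and WCG spaces have the \EP. I therefore expect no serious obstacle beyond assembling these ingredients.
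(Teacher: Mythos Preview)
Your proposal is correct and follows essentially the same approach as the paper: both invoke the short exact sequence \eqref{exactSequence} from the proof of Lemma~\ref{MrowkaSemEP}, observe that the endpoints $c_0$ and $c_0(K^{(1)})$ are WCG (hence have the \EP), while $C(K)$ does not. Your only addition is making explicit the existence of a separable nonmetrizable scattered compactum of height $3$ via the Mr\'owka construction, which the paper leaves implicit.
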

\begin{proof}
Consider the exact sequence \eqref{exactSequence} described in the proof Lemma \ref{MrowkaSemEP}. The spaces $c_0$ and $c_0(K^{(1)})$  have the \EP, since they are WCG spaces, but $C(K)$ does not have the \EP.
\end{proof}

The strategy to prove Theorem \ref{finiteheightEPMONO} is to proceed by induction on the height of $K$. In order to decrease the height and use the induction hypothesis we will use a quotient of $K$. Given a compact space $K$ and a closed subset $F$ of $K$, we define the following equivalence relation on K:
\[x \sim_F y \Leftrightarrow x=y \ \text{or} \ x,y \in F.\]
Clearly the quotient space $K/\sim_F$ is compact and it is easy to see that it is also Hausdorff.

\begin{lem}\label{QuotientF}
Given an ordinal $\alpha$, if $F=K^{(\alpha)}$, then $\big(K/\sim_F\big)^{(\alpha+1)}=\emptyset$.
\end{lem}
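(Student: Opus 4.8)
The plan is to reduce everything to the behaviour of the Cantor--Bendixson derivative on open subspaces. Write $L=K/\sim_F$, let $q\colon K\to L$ be the quotient map, and let $*\in L$ denote the single point onto which $F$ is collapsed, so that $q^{-1}[\{*\}]=F$. Since $F$ is closed, $K\setminus F$ is open in $K$; and since $q^{-1}[L\setminus\{*\}]=K\setminus F$, the set $U=L\setminus\{*\}$ is open in $L$. The first thing I would record is that $q$ restricts to a homeomorphism $q|_{K\setminus F}\colon K\setminus F\to U$: it is a continuous bijection, and for any open $W\subseteq K\setminus F$ one has $q^{-1}[q[W]]=W$ (no point of $F$, and no point outside $W$, is identified with a point of $W$), so $q[W]$ is open in $L$ and $q|_{K\setminus F}$ is an open map.

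The technical heart of the argument is the following general fact, which I would prove by transfinite induction: \emph{if $U$ is an open subspace of a topological space $X$, then $U^{(\beta)}=X^{(\beta)}\cap U$ for every ordinal $\beta$.} The base case is trivial and the limit case is immediate, since $U^{(\lambda)}=\bigcap_{\beta<\lambda}U^{(\beta)}=\big(\bigcap_{\beta<\lambda}X^{(\beta)}\big)\cap U=X^{(\lambda)}\cap U$. For the successor step the key observation is that a point $x\in U$ is non-isolated in $U$ if and only if it is non-isolated in $X$, precisely because $U$ is open and hence the neighbourhoods of $x$ lying in $U$ are cofinal among the neighbourhoods of $x$ in $X$. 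I expect this successor step to be the main obstacle, not because the one-step claim is hard, but because one must carry the bookkeeping correctly: having $U^{(\beta)}=X^{(\beta)}\cap U$ with $U$ open in $X$ makes $U^{(\beta)}$ an open subset of $X^{(\beta)}$, and only after noting this can one reapply the one-step claim inside the space $X^{(\beta)}$ to conclude $U^{(\beta+1)}=\big(X^{(\beta)}\big)^{(1)}\cap U=X^{(\beta+1)}\cap U$.

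With the general fact in hand the conclusion is quick. Applying it to $K\setminus F$ open in $K$ gives $(K\setminus F)^{(\beta)}=K^{(\beta)}\setminus F$, and applying it to $U$ open in $L$ gives $U^{(\beta)}=L^{(\beta)}\setminus\{*\}$. Since $q|_{K\setminus F}$ is a homeomorphism carrying $K\setminus F$ onto $U$, it carries $K^{(\beta)}\setminus F$ onto $L^{(\beta)}\setminus\{*\}$ for every $\beta$. Taking $\beta=\alpha$ and using the hypothesis $F=K^{(\alpha)}$, we get $K^{(\alpha)}\setminus F=\emptyset$, whence $L^{(\alpha)}\setminus\{*\}=\emptyset$, i.e.\ $L^{(\alpha)}\subseteq\{*\}$. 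Finally, a space with at most one point has no non-isolated points, so $L^{(\alpha+1)}=\emptyset$, which is exactly $\big(K/\sim_F\big)^{(\alpha+1)}=\emptyset$.
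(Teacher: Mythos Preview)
Your argument is correct, but it follows a different route from the paper. The paper invokes a general fact from the literature: for any continuous surjection $q\colon K\to L$ between compact spaces and any ordinal $\beta$, one has $L^{(\beta)}\subset q[K^{(\beta)}]$. Applying this directly with $L=K/\sim_F$ gives $L^{(\alpha)}\subset q[K^{(\alpha)}]=q[F]=\{*\}$, and the conclusion follows. You instead exploit the particular structure of the quotient: $q$ restricts to a homeomorphism $K\setminus F\to L\setminus\{*\}$, and since Cantor--Bendixson derivatives commute with passage to open subspaces (the fact you prove by transfinite induction), this homeomorphism carries $K^{(\alpha)}\setminus F$ onto $L^{(\alpha)}\setminus\{*\}$. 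Your approach is more elementary and entirely self-contained, avoiding the cited result (whose proof, especially at limit stages, genuinely uses compactness); the paper's approach is a one-line citation but hides the work in the reference. Both arrive at $L^{(\alpha)}\subseteq\{*\}$ and finish the same way.
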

\begin{proof}
It follows from the fact that if $K$ and $L$ are compact spaces and $q:K \to L$ is a continuous and onto map, then $L^{(\alpha)} \subset q[K^{(\alpha)}]$, for every ordinal $\alpha$ \cite[Lemma~1.8 and Theorem~1.9]{QuotientScattered}.
\end{proof}

\begin{teo}\label{finiteheightEPMONO}
Let $K$ be a compact space with finite height. If $C(K)$ has the \EP, then $K$ is monolithic.
\end{teo}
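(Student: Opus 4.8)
The plan is to argue by contraposition and to reduce everything to the height-three case already settled in Lemma~\ref{MrowkaSemEP}, running an induction on $\height(K)$ in which the height is lowered by the quotient of Lemma~\ref{QuotientF}. As recorded in the proof of Lemma~\ref{monoBipolar}, a compact space is monolithic exactly when the closure of every countable subset is metrizable, so it suffices to show that $K$ has no closed, separable, nonmetrizable subspace. I would first isolate two elementary facts about a scattered compactum $S$. First, $S$ is metrizable if and only if it is countable: a scattered space has no nonempty perfect subset, so a compact metric scattered space is countable by Cantor--Bendixson, while every countable compact space is metrizable. Second, if $S$ is separable then its set of isolated points is countable and dense, because in a scattered space the isolated points are dense and any dense set must contain them. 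Combining these, a separable scattered compactum is nonmetrizable precisely when it is uncountable, and every separable scattered compactum of height at most $2$ is countable (height $1$ gives a finite discrete space; for height $2$ the isolated points are countable and $S^{(1)}$ is compact discrete, hence finite), so it is metrizable.

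Now suppose $C(K)$ has the \EP\ but $K$ is not monolithic, and let $L=\overline{D}$ be the closure of a countable set witnessing this, so that $L$ is a closed, separable, nonmetrizable (equivalently uncountable) subspace of $K$ with $\height(L)=n$ finite. By Proposition~\ref{FecCompactEP}(1), $C(L)$ also has the \EP. I would prove by induction on $n$ that no separable, nonmetrizable scattered compactum of height at most $n$ can have the \EP. The base cases $n\le 3$ are covered by the observation above that heights $\le 2$ force metrizability (so the statement is vacuous there) together with Lemma~\ref{MrowkaSemEP} for $n=3$, since $L$ is then separable, nonmetrizable and of height exactly $3$.

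For the inductive step ($n\ge 4$) I would pass to the quotient $L'=L/{\sim_F}$ with $F=L^{(n-2)}$. By Lemma~\ref{QuotientF} we have $\height(L')\le n-1$, the space $L'$ is separable as a continuous image of $L$, and by Proposition~\ref{FecCompactEP}(2) $C(L')$ inherits the \EP. If $L'$ is nonmetrizable, the induction hypothesis gives a contradiction and we are done. The delicate point, and the main obstacle, is the case where $L'$ is metrizable, i.e. countable: collapsing $L^{(n-2)}$ may destroy all the nonmetrizability. In that case $L\setminus L^{(n-2)}$ is countable, and since the top derivative $L^{(n-1)}$ is finite (compact and discrete), the uncountability of $L$ is confined to the single level $L^{(n-2)}\setminus L^{(n-1)}$. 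I would then pass to the closed subspace $L^{(n-3)}$: its isolated points are $L^{(n-3)}\setminus L^{(n-2)}\subseteq L\setminus L^{(n-2)}$, hence countable, so $L^{(n-3)}$ is separable; it is uncountable because it contains $L^{(n-2)}\setminus L^{(n-1)}$; and it has height exactly $3$, since $(L^{(n-3)})^{(3)}=L^{(n)}=\emptyset$ while $(L^{(n-3)})^{(2)}=L^{(n-1)}\ne\emptyset$. Thus Lemma~\ref{MrowkaSemEP} shows that $C(L^{(n-3)})$ fails the \EP, contradicting Proposition~\ref{FecCompactEP}(1). This completes the induction and shows that $K$ is monolithic.

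The crux is the tension between the two passage operations: taking closed subspaces (Proposition~\ref{FecCompactEP}(1)) preserves the \EP\ and lowers the height but can ruin separability, whereas the quotient by $\sim_F$ (Proposition~\ref{FecCompactEP}(2)) preserves separability and lowers the height but can ruin nonmetrizability. The argument works by using the quotient in the generic case and, exactly when the quotient trivializes, exploiting that the surviving uncountable level then sits low enough inside $L^{(n-3)}$ for separability to be retained while the height drops to three. I expect the step requiring the most care to be precisely the verification that, in the degenerate case, the uncountability is confined to $L^{(n-2)}\setminus L^{(n-1)}$ and that $L^{(n-3)}$ is simultaneously separable and of height three, so that the base lemma applies.
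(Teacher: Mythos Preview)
Your argument is correct and follows essentially the same induction-on-height scheme as the paper: the same reduction to separable $L$, the same base case Lemma~\ref{MrowkaSemEP}, and the same quotient $L/{\sim_F}$ with $F=L^{(\height(L)-2)}$ via Lemma~\ref{QuotientF} and Proposition~\ref{FecCompactEP}. The only difference is in the second half of the inductive step: once the quotient is seen to be countable, the paper passes to $K^{(1)}$ and invokes the induction hypothesis a second time, whereas you jump straight to the height-$3$ subspace $L^{(n-3)}$ and apply the base lemma directly --- a harmless shortcut (note that your ``Case~1'' is in fact vacuous, since the induction hypothesis already forces $L'$ to be metrizable).
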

\begin{proof}
It follows from Proposition \ref{FecCompactEP}(1) that it suffices to prove that if $K$ is a separable compact space with finite height such that $C(K)$ has the \EP, then $K$ is metrizable. Let us proceed by induction on the height of $K$. The case when $\height(K)\le 2$ is trivial and the case when $\height(K)=3$ is established in Lemma \ref{MrowkaSemEP}.
Now assume that $\height(K)=N+1$ with $N \ge 3$ and denote by $F$ the closed subset $K^{(N-1)}$ of $K$. Note that Lemma \ref{QuotientF} implies that $K/\sim_F$ is a scattered space with $\height\big(K/\sim_F\big) \le N$. Since Proposition \ref{FecCompactEP}(2) ensures that $C\big(K/\sim_F\big)$ also has the \EP, it follows from the induction hypothesis that $K/\sim_F$ is metrizable. Recall that if an infinite compact space is scattered, then its weight coincides with its cardinality \cite[Proposition~17.10]{Koppelberg}. Thus we have that $K/\sim_F$ is countable and consequently $K \setminus F$ is countable. This implies that $K^{(1)} \setminus K^{(2)}$ is countable and therefore $K^{(1)}$ is separable.
Note that $\height(K^{(1)})=N$ and that Proposition \ref{FecCompactEP}(1) ensures that $C(K^{(1)})$ has the \EP. Thus it follows from the induction hypothesis that $K^{(1)}$ is metrizable and therefore $K^{(1)}$ is countable, since it is scattered.
Finally the metrizability of $K$ follows from the fact that $K \setminus K^{(1)}$ is also countable, since $K$ is separable.
\end{proof}

\begin{cor}\label{htw1}
Let $K$ be a compact space with height $\omega+1$. If $C(K)$ has the \EP, then $K$ is monolithic.
\end{cor}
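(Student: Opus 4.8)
The plan is to mirror the reduction used in Theorem \ref{finiteheightEPMONO} and then replace its downward induction by a single quotient trick, since the ordinal $\omega$ has no predecessor and one cannot peel off the top derivative level one step at a time. First, by Proposition \ref{FecCompactEP}(1), it suffices to show that a \emph{separable} compact space $K$ with $\height(K)\le\omega+1$ and $C(K)$ enjoying the \EP\ must be metrizable: any separable subspace of $K$ has separable compact closure $L$, which is a closed subspace with $\height(L)\le\omega+1$ and, by Proposition \ref{FecCompactEP}(1), with $C(L)$ having the \EP; establishing metrizability of every such $L$ yields second countability of every separable subspace, that is, monolithicity. For a separable scattered compact space, metrizability is equivalent to countability, so it is enough to prove that $K$ is countable.

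The key step is to collapse entire Cantor--Bendixson levels to reach finite height. For each $n<\omega$ set $F_n=K^{(n)}$ and consider the quotient $L_n=K/\!\sim_{F_n}$. By Lemma \ref{QuotientF}, $L_n^{(n+1)}=\emptyset$, so $L_n$ has finite height. The quotient map is continuous and onto, hence $L_n$ is separable (and scattered) and, by Proposition \ref{FecCompactEP}(2), $C(L_n)$ has the \EP. Theorem \ref{finiteheightEPMONO} then forces $L_n$ to be monolithic, and being separable and scattered it is metrizable and therefore countable. Since $L_n$ is obtained from $K$ by identifying $K^{(n)}$ to a single point, $|L_n|=|K\setminus K^{(n)}|+1$, so the countability of $L_n$ gives that $K\setminus K^{(n)}$ is countable for every $n$.

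Finally, since $K^{(\omega)}=\bigcap_{n<\omega}K^{(n)}$, we have $K\setminus K^{(\omega)}=\bigcup_{n<\omega}\big(K\setminus K^{(n)}\big)$, a countable union of countable sets, hence countable. When $\height(K)=\omega+1$ the set $K^{(\omega)}$ is a nonempty compact set with empty derived set, thus discrete and finite. Therefore $K=(K\setminus K^{(\omega)})\cup K^{(\omega)}$ is countable, and so metrizable, completing the reduction. The main obstacle is conceptual rather than computational: recognizing that the one-step induction of Theorem \ref{finiteheightEPMONO} cannot be continued past $\omega$, and that the right substitute is to quotient out each finite level $K^{(n)}$ at once, exploiting that such quotients preserve both the \EP\ (Proposition \ref{FecCompactEP}(2)) and finite height (Lemma \ref{QuotientF}).
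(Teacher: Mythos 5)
Your proof is correct and follows essentially the same route as the paper: reduce to the separable case via Proposition \ref{FecCompactEP}(1), collapse each finite-level derivative $K^{(n)}$ to a point, apply Lemma \ref{QuotientF}, Proposition \ref{FecCompactEP}(2) and Theorem \ref{finiteheightEPMONO} to conclude each quotient is countable, and assemble $K$ as a countable union together with the finite set $K^{(\omega)}$. The only differences are cosmetic (indexing by $K^{(n)}$ rather than $K^{(N+1)}$, and passing through monolithicity before metrizability).
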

\begin{proof}
It follows from Proposition \ref{FecCompactEP}(1) that it suffices to prove that if $K$ is a separable compact space with height $\omega+1$ such that $C(K)$ has the \EP, then $K$ is metrizable. In order to do so, we will prove that $K \setminus K^{(N+1)}$ is countable, for every $N \in \omega$. This will imply that $K$ is countable and therefore metrizable, since $K=K^{(\omega)} \cup \bigcup_{N \in \omega} K \setminus K^{(N+1)}$ and $K^{(\omega)}$ is finite. Given $N \in \omega$, denote by $F$ the closed subset $K^{(N+1)}$ of $K$ and note that Lemma \ref{QuotientF} implies that $K/\sim_F$ is a scattered compact space with $\height\big(K/\sim_F\big) \le N+2$. Since Proposition \ref{FecCompactEP}(2) ensures that $C\big(K/\sim_F\big)$ also has the \EP, it follows from Theorem \ref{finiteheightEPMONO} that $K/\sim_F$ is metrizable. This implies that $K/\sim_F$ is countable and thus $K \setminus F$ is countable.
\end{proof}

\begin{teo}\label{Scatteredc0EPiiMono}
If $K$ is a scattered compact space with height at most $\omega+1$, then $C(K)$ has the \EP\ if and only if $K$ is monolithic.
\end{teo}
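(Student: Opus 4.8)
The plan is to treat this theorem as an immediate synthesis of the results already established, handling the two implications separately and invoking the hypotheses only where they are needed.

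For the \emph{if} direction, suppose $K$ is monolithic. Since $K$ is by hypothesis a scattered compact space, Corollary \ref{ghat}(b) applies directly and yields that $C(K)$ has the $2$-\EP, hence in particular the \EP. This implication uses neither the height bound nor Property (M) explicitly, as those are absorbed into the statement of Corollary \ref{ghat}(b).

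For the \emph{only if} direction, suppose $C(K)$ has the \EP; the goal is to show that $K$ is monolithic. The key preliminary observation is that the height of a compact scattered space is always a successor ordinal. Indeed, at any limit ordinal $\lambda$ one has $K^{(\lambda)}=\bigcap_{\alpha<\lambda}K^{(\alpha)}$, an intersection of a decreasing chain of closed, hence compact, subsets of $K$; if each term were nonempty then the finite intersection property would force $K^{(\lambda)}\neq\emptyset$, so the least $\alpha$ with $K^{(\alpha)}=\emptyset$ cannot be a limit. Consequently the hypothesis $\height(K)\le\omega+1$ forces $\height(K)$ to be either finite or exactly $\omega+1$. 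In the first case $K$ is monolithic by Theorem \ref{finiteheightEPMONO}; in the second case $K$ is monolithic by Corollary \ref{htw1}. Combining the two cases completes this direction, and together the two implications give the stated equivalence.

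I expect no serious obstacle here, since the substantive work was carried out in Theorem \ref{finiteheightEPMONO} and Corollary \ref{htw1}. The only point that genuinely deserves care is the successor-ordinal observation above: it is precisely what guarantees that these two prior results jointly exhaust every admissible height. Without it one might worry about a gap at $\height(K)=\omega$, but the compactness argument rules out that value entirely, so the decomposition into the finite and the $\omega+1$ cases is complete.
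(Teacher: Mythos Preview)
Your proof is correct and follows exactly the approach of the paper, which simply cites Corollary~\ref{ghat}(b), Theorem~\ref{finiteheightEPMONO}, and Corollary~\ref{htw1}. The one thing you add is the explicit verification that the height of a compact scattered space is always a successor ordinal, which the paper leaves tacit; this is a harmless and arguably helpful clarification, not a different method.
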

\begin{proof}
It follows from Corollary \ref{ghat}(b), Theorem \ref{finiteheightEPMONO} and Corollary \ref{htw1}.
\end{proof}

An interesting consequence of Lemma \ref{MrowkaSemEP} is the existence of Valdivia compacta whose spaces of continuous functions do not have the \EP. Recall that a compact space $K$ is said to be a {\it Valdivia compactum} if there exists a set $I$ and an homeomorphic embedding $\varphi: K \to \R^I$ such that $\varphi^{-1}[\Sigma(I)]$ is dense in $K$. Clearly, every Corson compactum is Valdivia and it is easy to see that if $\kappa$ is an uncountable cardinal, then the Cantor cube $2^\kappa$ is a non-Corson Valdivia compactum \cite[Theorem~3.29]{Kalenda}.

\begin{prop}\label{2kappanoEP}
If $\kappa$ is an uncountable cardinal, then the space $C(2^\kappa)$ does not have the \EP.
\end{prop}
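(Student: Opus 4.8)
The plan is to exhibit, inside $2^\kappa$, a closed subspace homeomorphic to a separable nonmetrizable scattered compactum of height $3$, so that Lemma~\ref{MrowkaSemEP} together with the subspace stability of Proposition~\ref{FecCompactEP}(1) finishes the argument. The natural candidate is a Mrówka (or $\Psi$-) space: fix an uncountable almost disjoint family $\mathcal{A}=\{A_\xi:\xi<\omega_1\}$ of infinite subsets of $\mathbb{N}$ (such families exist in ZFC), let $\Psi(\mathcal{A})$ be the space with underlying set $\mathbb{N}\cup\mathcal{A}$ in which the points of $\mathbb{N}$ are isolated and the basic neighborhoods of $A_\xi$ are the sets $\{A_\xi\}\cup(A_\xi\setminus F)$ with $F\subseteq\mathbb{N}$ finite, and let $K_0=\Psi(\mathcal{A})\cup\{\infty\}$ be its one-point compactification.

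First I would check that $K_0$ satisfies the hypotheses of Lemma~\ref{MrowkaSemEP}. The set $\mathbb{N}$ is dense in $K_0$, so $K_0$ is separable. A direct computation of the Cantor--Bendixson derivatives gives $K_0^{(1)}=\mathcal{A}\cup\{\infty\}$, $K_0^{(2)}=\{\infty\}$ and $K_0^{(3)}=\emptyset$, so $\height(K_0)=3$. Finally, $\mathcal{A}$ is an uncountable relatively discrete subset of $K_0$, which prevents $K_0$ from being second countable; being compact, $K_0$ is therefore nonmetrizable. Lemma~\ref{MrowkaSemEP} then yields that $C(K_0)$ does not have the \EP.

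It remains to realize $K_0$ as a subspace of $2^\kappa$. Being scattered and compact Hausdorff, $K_0$ is zero-dimensional: a connected scattered Hausdorff space with more than one point would contain an isolated point, whose singleton is clopen and disconnects the space, so the connected components of $K_0$ are singletons and compactness upgrades total disconnectedness to a base of clopen sets. Moreover, as an infinite scattered compactum its weight equals its cardinality $\omega_1$ \cite{Koppelberg}, which is at most $\kappa$ since $\kappa$ is uncountable. Choosing a clopen base $\mathcal{B}$ of $K_0$ with $|\mathcal{B}|\le\kappa$, the evaluation map $x\mapsto(\chi_U(x))_{U\in\mathcal{B}}$ is a continuous injection of $K_0$ into $2^{\mathcal{B}}$, hence a homeomorphism onto its image, and after fixing the remaining coordinates this embeds $K_0$ as a closed subspace of $2^\kappa$. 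Since $C(K_0)$ fails the \EP\ and $K_0$ is a subspace of $2^\kappa$, the contrapositive of Proposition~\ref{FecCompactEP}(1) gives that $C(2^\kappa)$ does not have the \EP.

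The main obstacle is the passage from $K_0$ to $2^\kappa$: one must verify that this concrete nonmetrizable scattered space genuinely embeds into the Cantor cube. The key observation is that, although $2^\kappa$ maps continuously only onto dyadic spaces and scattered dyadic spaces are metrizable, so the quotient route of Proposition~\ref{FecCompactEP}(2) is unavailable, the subspace route is unobstructed precisely because zero-dimensional compacta of weight at most $\kappa$ embed into $2^\kappa$. Once the embedding and the elementary Mrówka computations are in place, the conclusion is an immediate appeal to the two quoted results.
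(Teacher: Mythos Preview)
Your proof is correct and follows essentially the same route as the paper: both arguments exhibit a separable nonmetrizable scattered compactum of height $3$ (the paper simply asserts one exists, while you explicitly build a Mr\'owka--Isbell space), observe that it is zero-dimensional with weight $\omega_1\le\kappa$ and hence embeds into $2^\kappa$, and then conclude via Lemma~\ref{MrowkaSemEP} and Proposition~\ref{FecCompactEP}(1). The only cosmetic difference is that the paper first reduces to $\kappa=\omega_1$ before embedding, whereas you embed directly into $2^\kappa$.
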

\begin{proof}
Proposition \ref{FecCompactEP}(1) ensures that it suffices to show that $C(2^{\omega_1})$ does not have the \EP, since $2^{\omega_1}$ embeds homeomorphically in $2^\kappa$, for any uncountable cardinal $\kappa$. Let $K$ be a separable scattered compact space with height $3$ and weight $\omega_1$.
Since $K$ is zero-dimensional, we have that $K$ embeds homeomorphically in $2^{\omega_1}$. Therefore the result follows from Proposition \ref{FecCompactEP}(1) and Lemma \ref{MrowkaSemEP}.
\end{proof}

\begin{rem}\label{valdiviaSemEP}
Recall that if $K$ is a Valdivia compactum, then $C(K)$ is a $1$-Plichko space \cite[Theorem~5.55]{Biorthogonal}. Thus it follows from what was discussed in the previous section that $C(K)$ has the separable \EP, for every Valdivia compactum $K$.
\end{rem}

\begin{proof}[Proof of Proposition \ref{l1SemEP}]
According to Proposition \ref{fechamentosEP}(b), it suffices to show that if $I$ is an uncountable set, then $\ell_1(I)$ has a Banach quotient without the \EP. It is well-known that every Banach space with density $\vert I \vert$ is a quotient of $\ell_1(I)$. The result follows from Proposition \ref{2kappanoEP}, since the density of $C(2^{I})$ is $\vert I \vert$.
\end{proof}

We can generalize Proposition \ref{2kappanoEP} to the class of dyadic compacta. A compact space is said to be a {\it dyadic compactum} if it is a continuous image of the Cantor cube $2^\kappa$, for some cardinal $\kappa$.

\begin{cor}
If $K$ is a nonmetrizable dyadic compactum, then $C(K)$ does not have the \EP.
\end{cor}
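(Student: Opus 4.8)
The plan is to reduce to Proposition~\ref{2kappanoEP} by locating a copy of the Cantor cube $2^{\omega_1}$ inside $K$ and then invoking the subspace stability of the \EP. Since any compact space is metrizable if and only if it is second countable, the nonmetrizability of $K$ forces $w(K) \ge \omega_1$. The essential structural input is a classical theorem of Efimov on dyadic spaces: every dyadic compactum of uncountable weight contains a homeomorphic copy of $2^{\omega_1}$. Granting this, $2^{\omega_1}$ is (homeomorphic to) a closed subspace of $K$, since an embedding of a compact space into a Hausdorff space has compact, hence closed, image.

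With this embedding in hand the argument becomes purely formal. First I would record that $K$ nonmetrizable yields $w(K) \ge \omega_1$, and then apply Efimov's theorem to obtain $2^{\omega_1}$ as a subspace of $K$. Next I would assume for contradiction that $C(K)$ has the \EP; since $2^{\omega_1}$ is a subspace of $K$, Proposition~\ref{FecCompactEP}(1) yields that $C(2^{\omega_1})$ has the \EP, which directly contradicts Proposition~\ref{2kappanoEP}. Hence $C(K)$ does not have the \EP.

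The main obstacle is entirely the topological input, not the Banach-space bookkeeping. There is no self-contained, paper-internal way to produce the copy of $2^{\omega_1}$, because a dyadic compactum such as $[0,1]^{\omega_1}$ need not be zero-dimensional; thus one cannot simply transplant the separable scattered height-$3$ witness of Lemma~\ref{MrowkaSemEP} into an arbitrary $K$ in the way it was transplanted into $2^{\omega_1}$ in the proof of Proposition~\ref{2kappanoEP}. Efimov's theorem is precisely what bridges this gap, after which everything follows from the already-established subspace stability. As an alternative ending, once $2^{\omega_1}\subseteq K$ is secured one could instead observe that $C(2^{\omega_1})$ contains an isomorphic copy of $\ell_1(\omega_1)$ and conclude via Corollary~\ref{copial1SemEP}; but routing through Proposition~\ref{2kappanoEP} is cleaner, since that proposition is already available.
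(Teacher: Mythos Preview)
Your proposal is correct and follows essentially the same route as the paper: invoke the Gerlits--Efimov theorem to find a copy of $2^{\omega_1}$ inside the nonmetrizable dyadic compactum $K$, and then combine Proposition~\ref{FecCompactEP}(1) with Proposition~\ref{2kappanoEP}. The only cosmetic difference is that the paper cites the result as due to Gerlits and Efimov (via Engelking) and states it directly for nonmetrizable dyadic compacta rather than first passing through $w(K)\ge\omega_1$.
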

\begin{proof}
Gerlits and Efimov showed that every nonmetrizable dyadic compactum contains an homeomorphic copy of the Cantor cube $2^{\omega_1}$ (see \cite[3.12.12]{Engelking}). Therefore the result follows from Propositions \ref{FecCompactEP}(1) and \ref{2kappanoEP}.
\end{proof}

We conclude this section by showing that if $K$ is a compact space such that $C(K)$ has the \EP, then $K$ admits only measures with small Maharam type. Given a compact space $K$ and a cardinal $\kappa$, we say that a nonnegative measure $\mu \in M(K)$ has {\it Maharam type} $\kappa$ if $\kappa$ is the density of the Banach space $L_1(\mu)$.

\begin{prop}
Let $K$ be a compact space. If $C(K)$ has the \EP, then every nonnegative measure in $M(K)$ has Maharam type at most $\omega_1$.
\end{prop}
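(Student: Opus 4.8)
The plan is to argue by contraposition: assuming that $K$ carries a nonnegative measure $\mu$ whose Maharam type exceeds $\omega_1$, that is, whose type is at least $\omega_2$, I will produce the obstruction to the \EP\ already isolated in this section, namely an isomorphic copy of $\ell_1(\omega_1)$ inside $C(K)$, and then invoke Corollary \ref{copial1SemEP}.

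First I would reduce to the case where $\mu$ has full support. The support $L=\supp\mu$ is a closed subset of $K$, and $\mu$, regarded as a measure on $L$, has the same space $L_1(\mu)$ and hence the same Maharam type $\ge\omega_2$; moreover, by Proposition \ref{FecCompactEP}(1), if $C(K)$ has the \EP\ then so does $C(L)$. Thus it suffices to derive a contradiction under the extra assumption $K=\supp\mu$, which lets me pass freely between statements about the measure algebra of $\mu$ and topological statements about $K$. Next I would unpack the hypothesis $\mathrm{type}(\mu)\ge\omega_2$ via Maharam's structure theorem: the measure algebra of $\mu$ either has a homogeneous component of type at least $\omega_2$ or decomposes into at least $\omega_2$ pieces of positive measure, and after a routine reduction this yields a sufficiently large independent family $\{E_\xi:\xi<\omega_2\}$ of measurable sets of positive measure to feed into the next step.

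The heart of the argument is then to convert this large family into an isomorphic copy of $\ell_1(\omega_1)$ inside $C(K)$. For this I would invoke the known structural fact, in the spirit of the embedding results of \cite{ArgyrosCorson}, that a nonnegative measure of Maharam type at least $\omega_2$ on a compact space $K$ forces $\ell_1(\omega_1)$ to embed isomorphically into $C(K)$. (An alternative packaging of the same conclusion is that $K$ admits a continuous surjection onto $2^{\omega_1}$, so that $C(2^{\omega_1})$ is a quotient of $C(K)$ by Proposition \ref{FecCompactEP}(2) and Proposition \ref{2kappanoEP} applies.) With such a copy in hand, Corollary \ref{copial1SemEP} immediately yields that $C(K)$ fails the \EP, completing the contrapositive.

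I expect the main obstacle to be precisely this conversion step, and it is worth emphasizing why the natural threshold here is $\omega_2$ rather than $\omega_1$. A stochastically independent family produces only an $\ell_2$-structure inside $L_1(\mu)$, not an $\ell_1$-structure among continuous functions; extracting a long $\ell_1$-sequence in $C(K)$ requires a combinatorial selection on the index set, and the decisive point is that this selection can be carried out in ZFC once the family has size $\omega_2$, whereas at size $\omega_1$ the analogous assertion is exactly the set-theoretically sensitive phenomenon underlying the independence results of Section \ref{C(K)spaces} (compare Proposition \ref{CorsonSemEP} and the surrounding discussion). This is also why the stated bound is ``at most $\omega_1$'' and is presumably not optimal.
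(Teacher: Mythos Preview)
Your approach is the paper's: argue by contraposition, invoke the external result that a measure of Maharam type $>\omega_1$ forces an isomorphic copy of $\ell_1(\omega_1)$ inside $C(K)$, and finish with Corollary~\ref{copial1SemEP}. The paper does exactly this in two lines, citing \cite{PlebanekMaharam} (not \cite{ArgyrosCorson}) for the embedding; your reduction to full support and the detour through Maharam's decomposition are harmless but unnecessary, since Plebanek's result applies directly. One caution about your ``alternative packaging'': the existence of a continuous surjection $K\to 2^{\omega_1}$ is Haydon's theorem, whose hypothesis is Maharam type $>\mathfrak c$, not $>\omega_1$, and these thresholds need not agree in ZFC---the point of \cite{PlebanekMaharam} is precisely to obtain the $\ell_1(\omega_1)$-embedding at the lower threshold $\omega_2$ without passing through such a surjection, so that alternative route is not available here.
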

\begin{proof}
If there exists a nonnegative measure in $M(K)$ with Maharam type strictly greater than $\omega_1$, then $C(K)$ contains an isomorphic copy of $\ell_1(\omega_1)$ \cite{PlebanekMaharam}. Therefore Corollary \ref{copial1SemEP} implies that $C(K)$ does not have the \EP.
\end{proof}

Assuming $MA+\neg CH$, we obtain the following stronger result.

\begin{prop}
Assume $MA+\neg CH$. Let $K$ be a compact space. If $C(K)$ has the \EP, then every nonnegative measure in $M(K)$ has countable Maharam type.
\end{prop}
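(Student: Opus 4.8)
The plan is to argue by contradiction, leveraging the ZFC bound already proved in the preceding proposition and then closing the borderline case with the extra strength of $MA+\neg CH$. So I would begin by assuming that $C(K)$ has the \EP\ and yet some nonnegative measure $\mu \in M(K)$ has uncountable Maharam type, i.e. $L_1(\mu)$ is nonseparable. By the preceding proposition (which is a ZFC result) the density of $L_1(\mu)$ is at most $\omega_1$, so under our assumption it must equal $\omega_1$ exactly. The whole problem thus reduces to excluding a single borderline possibility: a measure whose associated measure algebra has Maharam type precisely $\omega_1$.

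The key observation is that this borderline case is exactly the one that ZFC cannot decide, which is why the axiom is needed. Indeed, in the proof of the preceding proposition one uses \cite{PlebanekMaharam} to the effect that Maharam type \emph{strictly} greater than $\omega_1$ already forces an isomorphic copy of $\ell_1(\omega_1)$ into $C(K)$; type equal to $\omega_1$ lies just below that threshold and is genuinely independent of ZFC. The crux of the argument, and the step I expect to be the main obstacle, is therefore to show that under $MA+\neg CH$ a measure of Maharam type $\omega_1$ \emph{still} forces $\ell_1(\omega_1)$ to embed into $C(K)$. Here $\omega_1 < 2^\omega$, and it is Martin's axiom (rather than mere cardinal arithmetic) that supplies the combinatorial machinery needed to extract from such a measure the required $\ell_1(\omega_1)$-family of continuous functions; I would invoke the corresponding $MA+\neg CH$ strengthening of the embedding theorem of \cite{PlebanekMaharam} and isolate it as a cited lemma rather than reprove it, precisely because in other models of ZFC the conclusion can fail.

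Granting that embedding, the conclusion is immediate: since $C(K)$ contains an isomorphic copy of $\ell_1(\omega_1)$, Corollary \ref{copial1SemEP} shows that $C(K)$ does not have the \EP, contradicting the hypothesis. Hence no nonnegative measure of Maharam type $\omega_1$ can exist on $K$, and combining this with the ZFC bound of the preceding proposition we conclude that every nonnegative measure in $M(K)$ has countable Maharam type. The entire weight of the proof rests on the middle paragraph's embedding step; the outer reductions to and from the $\ell_1(\omega_1)$ copy are routine once that ingredient is in place.
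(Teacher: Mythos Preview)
Your proposal is correct and follows essentially the same approach as the paper: both invoke the $MA+\neg CH$ result from \cite{PlebanekMaharam} that a nonnegative measure of uncountable Maharam type forces an isomorphic copy of $\ell_1(\omega_1)$ into $C(K)$, and then apply Corollary~\ref{copial1SemEP}. Your preliminary reduction to Maharam type exactly $\omega_1$ via the preceding ZFC proposition is harmless but unnecessary, since the cited result from \cite{PlebanekMaharam} already covers all uncountable types directly.
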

\begin{proof}
Under $MA+\neg CH$, if there exists a nonnegative measure in $M(K)$ with uncountable Maharam type, then $C(K)$ contains an isomorphic copy of $\ell_1(\omega_1)$ \cite{PlebanekMaharam}. Therefore, Corollary \ref{copial1SemEP} implies that $C(K)$ does not have the \EP.
\end{proof}

\end{section}

\end{document}